\newtheorem{theorem}{Theorem}[section]
\newtheorem{definition}[theorem]{Definition}
\newtheorem{observation}[theorem]{Observation}
\newtheorem{lemma}[theorem]{Lemma}
\newtheorem{corollary}[theorem]{Corollary}
\def\ni{\noindent}
\newcommand{\diam}{\mathop{\mathrm{diam}}}
\title{Mutual-visibility coloring of graphs}
\author{Saneesh Babu $^{a,}$\footnote{\tt saneeshbabu1996@gmail.com} 
   \and Gabriele Di Stefano $^{b,}$\footnote{\tt gabriele.distefano@univaq.it}
	\and Aparna Lakshmanan S $^{a,}$\footnote{\tt aparnaren@gmail.com, aparnals@cusat.ac.in} \\\\
	$^{a}$ \small Department of Mathematics, Cochin University of Science and Technology, \\ 
    \small Cochin - 22, Kerala, India \\
    $^b$ \small Department of Information Engineering, Computer Science and Mathematics, \\
    \small University of L'Aquila, Italy \\
}
\date{}
\begin{document}

\maketitle
\begin{abstract}
The mutual-visibility chromatic number of a graph $G$ is the smallest number of colors needed to color the vertices of $G$ such that each color class is a mutual-visibility set. In this paper, we prove that determining the mutual-visibility chromatic number of a graph is NP-complete even when restricted to the class of graphs having diameter four and mutual-visibility chromatic number two. We further determine the exact value of the mutual-visibility chromatic number for glued binary trees and glued $t$-ary trees.
\end{abstract}
\noindent{\bf Keywords:} Mutual-visibility number, mutual-visibility chromatic number, computational complexity, glued binary tree, glued $t$-ary tree

\medskip\noindent
{\bf AMS Subj.\ Class.\ (2020)}: 05C15, 68Q17, 05C69
\section{Introduction}

The \emph{no-three-in-line problem} is a classical problem in discrete geometry known from games in various cultures since ancient times \cite{ANC2,ANC1}. It has been a subject of mathematical study in combinatorics after Dudeney's chessboard puzzle which was proposed in the early twentieth century \cite{DDC}. This problem was later generalized to larger grids and planes \cite{ENT3, ENT, ENT4, ENT2}. Graph theoretic version of this problem was introduced independently in $2016$ and $2018$ under the names \emph{geodesic irredundant sets} \cite{GPS1} and \emph{general position sets} \cite{GPS2}, respectively. Later, the graph theoretic version of the problem is popular under the name general position problem which has been recently surveyed in~\cite{survey}, see also~\cite{Araujo-2025, Klavzar-2025b, KorzeVesel-2025, Roy-2025}. Subsequently, several variations of this problem emerged \cite{Roy-2026}, among which the most extensively studied is the \emph{mutual-visibility}  variant, first introduced by Di Stefano in $2022$ \cite{MVN1}. A set $S \subseteq V(G)$ is mutually visible if for every $x,y \in S$, there exists a shortest $x-y$ path with no internal vertices in $S$. Beyond its application in computer science, particularly in robot visibility, the mutual-visibility problem connects to combinatorial problems such as Zarankiewicz and Turan problems \cite{MVN2,MVN4,MVN3}.\par
In $2024$, Sandi Klav\v zar et al. introduced a vertex coloring of graphs in which each color class is a mutual-visibility set \cite{MVC1}. The \emph{mutual-visibility coloring}  is defined as follows: two vertices $x,y \in V (G)$ can be given the same color if there exists an $x-y$ geodesic path whose internal vertices have colors different from that of $x$ and $y$. The smallest number of colors required for such a coloring of $G$ is called \emph{mutual-visibility chromatic number}, denoted by $\chi_{\mu}(G)$. Following this, vertex coloring based on general position sets \cite{MVC3}, independent mutual-visibility sets \cite{MVC2} and  distance mutual-visibility sets \cite{Babu-2026} have also been investigated.\par
It is proved in \cite{MVC3} that the general position problem is NP-complete even when restricted to the class of graphs with diameter two and general position number three. The concept of independent general position coloring is also introduced and proved to be NP-complete even when restricted to the class of graphs with diameter at most three. In \cite{MVC1}, computing the computational complexity of mutual-visibility chromatic number is left as an interesting problem for further investigation. The decision problem arising from the independent version of mutual-visibility coloring \cite{MVC2} is proved to be NP-complete even in graphs with universal vertices and the authors conjectured that mutual-visibility coloring is NP-complete.\par

A {\it perfect $t$-ary tree}, $T_{r,t}$ of depth $r \geq 1$, is a rooted tree in which all non-leaf vertices have $t$ children, and all leaves have depth $r$. It has $\frac{t^{r+1}-1}{t-1}$ vertices and $t^r$ leaves. For $t \geq 2$ and $r \geq 1$, a {\it glued $t$-ary tree}, $GT(r,t)$ is obtained from two copies of $T_{r,t}$ by pairwise identifying their leaves \cite{Klavzar-2025b}. The vertices obtained by identification are called {\it quasi-leaves} of $GT(r,t)$ and the set of quasi-leaves of $GT(r,t)$ will be denoted by $L(GT(r,t))$. For a vertex $u$ in $GT(r,t)$, $T_u $ denotes the tree rooted in $u$ and having as leaves all the descendants of $u$ that are quasi-leaves.\par

The perfect $2$-ary and glued $2$-ary trees are called perfect binary trees and glued binary trees \cite{GPS2}, respectively. We will simplify the notation by setting $GT(r)=GT(r,2)$. We will denote the two copies of $T_{r,2}$, from which $GT(r)$ is constructed, by $T_r^{(1)}$ and $T_r^{(2)}$, so that $L(GT(r))= V(T_r^{(1)}) \cap V(T_r^{(2)})$. Moreover, we set $V_r^{(1)} = V(T_1^{(1)}) \backslash L(GT(r))$ and $V_r^{(2)} = V(T_1^{(2)}) \backslash L(GT(r))$.\par

In this paper, the mutual-visibility coloring problem is proved to be NP-complete settling the conjecture. In view of its computational complexity, it is natural to study this parameter in specific graph classes. In the last section we determine the exact values of mutual-visibility coloring for glued binary trees and extend it further the glued $t$-ary trees.
	
\section{Computational complexity}\label{sec:complexity}
To study the computational complexity of finding a mutual-visibility coloring of a graph that uses the smallest possible number of colors, we introduce the following decision problem.
\begin{definition}
{\sc MV-coloring} problem: \\
{\sc Instance}: A graph $G$, a positive integer $k\leq |V(G)|$. \\
{\sc Question}: Is there a mutual-visibility coloring of $G$ such that $\chi_\mu(G)\leq k$?
\end{definition}
The problem is hard to solve as shown by the next Theorem~\ref{thm:mvc-NP}. Before proving the theorem, we introduce a graph whose properties are widely used in the proof of the theorem.
Let $H_n$, $n\geq 2$, be the graph consisting of two stars on $n+2$ vertices where $n$ leaves are identified. Let $c$ and $p$ denote the center and the non-identified leaf of the first star. Let $c'$ and $p'$ denote the corresponding vertices of the second star.
Let $p_1, p_2,\ldots,p_n$ be the identified leaves. A representation of $H_n$ is given in Figure~\ref{fig:Hn}.

\begin{figure}
    \centering
\begingroup%
  \makeatletter%
  \providecommand\color[2][]{%
    \errmessage{(Inkscape) Color is used for the text in Inkscape, but the package 'color.sty' is not loaded}%
    \renewcommand\color[2][]{}%
  }%
  \providecommand\transparent[1]{%
    \errmessage{(Inkscape) Transparency is used (non-zero) for the text in Inkscape, but the package 'transparent.sty' is not loaded}%
    \renewcommand\transparent[1]{}%
  }%
  \providecommand\rotatebox[2]{#2}%
  \newcommand*\fsize{\dimexpr\f@size pt\relax}%
  \newcommand*\lineheight[1]{\fontsize{\fsize}{#1\fsize}\selectfont}%
  \ifx\svgwidth\undefined%
    \setlength{\unitlength}{100.38020325bp}%
    \ifx\svgscale\undefined%
      \relax%
    \else%
      \setlength{\unitlength}{\unitlength * \real{\svgscale}}%
    \fi%
  \else%
    \setlength{\unitlength}{\svgwidth}%
  \fi%
  \global\let\svgwidth\undefined%
  \global\let\svgscale\undefined%
  \makeatother%
  \begin{picture}(1,1.02891765)%
    \lineheight{1}%
    \setlength\tabcolsep{0pt}%
    \put(0,0){\includegraphics[width=\unitlength,page=1]{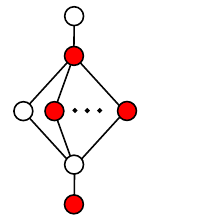}}%
    \put(0.41926528,0.19496974){\color[rgb]{0,0,0}\makebox(0,0)[lt]{\lineheight{1.25}\smash{\begin{tabular}[t]{l}$c'$\end{tabular}}}}%
    \put(0.42673687,0.78735466){\color[rgb]{0,0,0}\makebox(0,0)[lt]{\lineheight{1.25}\smash{\begin{tabular}[t]{l}$c$\end{tabular}}}}%
    \put(0.42673687,0.96667289){\color[rgb]{0,0,0}\makebox(0,0)[lt]{\lineheight{1.25}\smash{\begin{tabular}[t]{l}$p$\end{tabular}}}}%
    \put(0.41947101,0.01657756){\color[rgb]{0,0,0}\makebox(0,0)[lt]{\lineheight{1.25}\smash{\begin{tabular}[t]{l}$p'$\end{tabular}}}}%
    \put(-0.00661552,0.41377508){\color[rgb]{0,0,0}\makebox(0,0)[lt]{\lineheight{1.25}\smash{\begin{tabular}[t]{l}$p_1$\end{tabular}}}}%
    \put(0.31644423,0.41234252){\color[rgb]{0,0,0}\makebox(0,0)[lt]{\lineheight{1.25}\smash{\begin{tabular}[t]{l}$p_2$\end{tabular}}}}%
    \put(0.64341352,0.41342684){\color[rgb]{0,0,0}\makebox(0,0)[lt]{\lineheight{1.25}\smash{\begin{tabular}[t]{l}$p_n$\end{tabular}}}}%
  \end{picture}%
\endgroup%

    \caption{The graph $H_n$ with a possible two coloring.}
    \label{fig:Hn}
\end{figure}

\begin{lemma}\label{lem:Hn}
    In a mutual-visibility coloring of $H_n$ with two colors, vertices $c$ and $p$ ($c'$ and $p'$, respectively) cannot be colored with the same color. In the set of vertices $\{p_1, p_2,\ldots,p_n\}$, at least two vertices are colored with different colors.
\end{lemma}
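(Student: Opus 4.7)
My plan is to argue both parts by contradiction, repeatedly using the fact that $p$ is a pendant vertex whose only neighbour is $c$ (so every geodesic leaving $p$ must start with the edge $pc$ and hence have $c$ as an internal vertex whenever its length is at least $2$), and symmetrically for $p'$ and $c'$. For the first statement, I would suppose that $c$ and $p$ both receive some colour $\alpha$. Since every vertex $v \neq c$ lies at distance at least $2$ from $p$ and every $p$--$v$ geodesic has $c$ as an internal vertex, if $v$ also has colour $\alpha$ then the mutual-visibility condition fails on the pair $(p,v)$. With only two colours available this forces every vertex other than $c$ to carry the opposite colour $\beta$; in particular $c'$, $p'$, and all of $p_1,\dots,p_n$ are coloured $\beta$. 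But then the pair $(p', p_1)$, at distance $2$, has the unique geodesic $p'\text{--}c'\text{--}p_1$ whose internal vertex $c'$ is also $\beta$, a contradiction. The symmetric argument applied to $(c', p')$ finishes the first claim.

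For the second statement, I would suppose that every $p_i$ carries the same colour $\gamma$, and let $\beta$ denote the other colour. By the first part, exactly one of $\{c, p\}$ and exactly one of $\{c', p'\}$ is coloured $\gamma$, which yields four sub-cases. In each sub-case I would exhibit a pair of same-coloured vertices whose every geodesic in $H_n$ must traverse some $p_i$, and is therefore forced to use an internal vertex of colour $\gamma$, violating the mutual-visibility condition. Concretely, if both $c$ and $c'$ are $\gamma$, the pair $(c,c')$ at distance $2$ is obstructed because every geodesic is of the form $c\text{--}p_i\text{--}c'$; if $c$ is $\gamma$ and $c'$ is $\beta$ (so $p'=\gamma$), the pair $(c, p')$ at distance $3$ is obstructed through its unique geodesic type $c\text{--}p_i\text{--}c'\text{--}p'$; the case $c=\beta,\ c'=\gamma$ is symmetric; and if both $c$ and $c'$ are $\beta$ (so $p,p'$ are both $\gamma$), the pair $(p, p')$ at distance $4$ is obstructed by $p\text{--}c\text{--}p_i\text{--}c'\text{--}p'$.

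The conceptual core of the proof is the observation that the two halves of $H_n$ are joined only through the middle layer $\{p_1,\dots,p_n\}$, so a monochromatic middle layer creates an unavoidable same-colour internal vertex on every geodesic crossing between the halves, while the pendant pair $p,c$ (resp.\ $p',c'$) funnels visibility from $p$ through $c$. The main obstacle is nothing more than the bookkeeping across the four sub-cases of the second part; once the distances $d(c,c')=2$, $d(c,p')=d(p,c')=3$, $d(p,p')=4$ and the explicit form of each geodesic are laid out, each sub-case reduces to reading off a single forced geodesic that fails mutual visibility, so no further ingenuity is required.
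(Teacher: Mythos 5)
Your proof is correct and rests on the same two structural observations the paper uses: every geodesic leaving $p$ is funneled through $c$, and every geodesic between the two halves of $H_n$ passes through some $p_i$. The only difference is presentational — for the second claim the paper argues directly that mutual visibility of the red pair and of the white pair each forces a $p_i$ of the opposite colour, whereas you run a four-case contradiction — but the underlying argument is the same.
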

\begin{proof}
Consider a mutual-visibility coloring of $H_n$ using two colors, say red and white. If a pendant vertex and its support vertex are given the same color, then that color cannot be used anywhere else in the graph. Therefore, if $p$ and $c$ (equivalently, $p'$ and $c'$) are given the same color, then we require at least three colors to color $H_n$.  Suppose vertices $p$ and $c$ are colored red. Therefore, $p$ and $c$ (equivalently, $p'$ and $c'$) must be of different colors. Without loss of generality assume that $p$ is white and $c$ is red. Since at least one among $c'$ and $p'$ is white (equivalently, red), at least one among $\{p_1, p_2, ..., p_n\}$ must be red (equivalently, white) for mutual-visibility. Hence, the lemma. 
\end{proof}

\begin{figure}\label{fig:NP}
    \centering
    \scalebox{0.8} {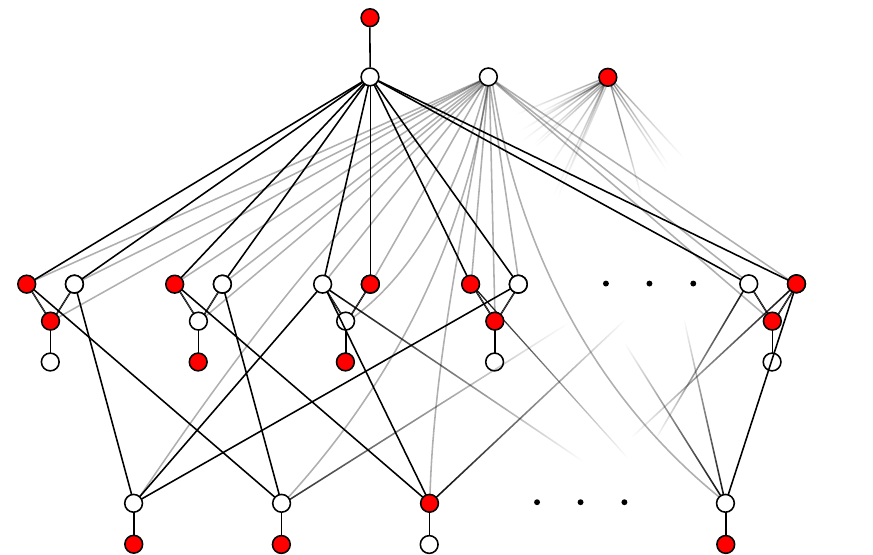}
    \caption{The graph $G$ used in the proof of Theorem~\ref{thm:mvc-NP}. Clause $c_1=\{\bar{x_1},x_3,\bar{x_4}\}$ is represented by the $H_3$ subgraph induced by vertices $p,c,v_1,w_1$ $v_1$ and the three vertices $\bar{u_1},u_3$, and $\bar{u_4}$ in $T_1$.}
    \label{fig:G}
\end{figure}
\begin{theorem}\label{thm:mvc-NP}
 {\sc MV-coloring} is NP-complete even for instances $(G,k)$ with $\diam(G) = 4$ and $k=2$.  
\end{theorem}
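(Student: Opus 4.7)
The plan is to prove NP-completeness by a reduction from NAE-3SAT (Not-All-Equal 3-SAT), which dovetails perfectly with Lemma~\ref{lem:Hn}: the conclusion that the identified leaves of $H_n$ cannot be monochromatic under any 2-coloring is precisely the ``not all equal'' condition on a clause. Membership in NP is routine: given a candidate 2-coloring, for every pair of same-colored vertices one can check via BFS in polynomial time whether some shortest path between them avoids that color on its internal vertices.

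For the hardness direction, from a NAE-3SAT instance $\phi$ with variables $x_1,\ldots,x_q$ and clauses $c_1,\ldots,c_r$, construct the graph $G$ of Figure~\ref{fig:G}. The clause gadget is the core of the construction: for each clause $c_j$, the vertices $\{p, c, v_j, w_j\}$ together with the three literal-vertices of $c_j$ induce a copy of $H_3$, with $c$ and $v_j$ as the two star centers, $p$ and $w_j$ as the two non-identified leaves, and the three literal-vertices as the identified leaves. A variable gadget built from $u_i,\bar u_i,a_i,b_i$ and the global vertices $z,z'$ is designed so that in any mutual-visibility 2-coloring of $G$, the vertices $u_i$ and $\bar u_i$ must receive different colors, yielding a well-defined truth assignment to $\phi$.

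Combining the two gadgets: from any mutual-visibility 2-coloring of $G$, applying the argument of Lemma~\ref{lem:Hn} inside each clause copy of $H_3$ forces its three literal-vertices to use both colors, which reads off as a NAE-satisfying assignment via the variable gadget. Conversely, from a NAE-satisfying assignment, one defines an explicit 2-coloring (for instance, color $c$ and every $v_j$ red, $p$ and every $w_j$ white, assign each $u_i, \bar u_i$ according to the variable's truth value, and complete $a_i, b_i, z, z'$ coherently) and verifies by direct inspection of geodesics that each color class is a mutual-visibility set. A short direct computation then shows $\diam(G) = 4$, witnessed by paths of the form $w_j - v_j - \ell - c - p$ for a literal $\ell$, completing the theorem's refined statement.

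The main obstacle is faithfully transferring the argument of Lemma~\ref{lem:Hn} from the stand-alone graph $H_n$ to the many $H_3$ copies sitting inside $G$: extraneous shortest paths in $G$ between vertices of such a copy could in principle bypass the $H_3$-gadget and weaken the NAE constraint on its identified leaves. The construction must therefore be verified to admit no such shortcut; in particular, every shortest $p$--$w_j$ path must be forced to pass through $c$ and a literal-vertex of $c_j$, and every shortest $c$--$v_j$ path must go through a literal of $c_j$. A symmetric but more delicate geodesic analysis is required for the variable gadget, where the interplay between the auxiliary vertices $a_i,b_i$ and the shared vertices $z,z'$ must rule out any 2-coloring in which $u_i$ and $\bar u_i$ are monochromatic; this is where the precise adjacencies of $a_i,b_i,z,z'$ do the real work.
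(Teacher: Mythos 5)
Your plan follows the paper's proof essentially step for step: membership in NP by checking geodesics, a reduction from NAE-3SAT, clause gadgets that are copies of $H_3$ all sharing the pair $p,c$ (with $c$ and $v_j$ as centers, $p$ and $w_j$ as pendants, and the three literal-vertices as identified leaves), variable gadgets forcing $u_i$ and $\bar u_i$ to get opposite colors, and the vertices $z,z'$ to control distances. Two points, however, need correcting or completing. First, your variable gadget is misattributed: in the paper's construction the forcing of $u_i\neq\bar u_i$ does not come from $z,z'$ at all, but from the fact that $\{u_i,\bar u_i,a_i,b_i,p,c\}$ induces a copy of $H_2$ with centers $a_i$ and $c$, pendants $b_i$ and $p$, and identified leaves $u_i,\bar u_i$ --- i.e.\ the same pair $p,c$ is reused as half of every variable gadget, so Lemma~\ref{lem:Hn} applies directly. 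The vertices $z,z'$ serve a different purpose: they pin $\diam(G)=4$, they receive opposite colors because $\{a_1,b_1,z,z',v_1,w_1\}$ induces yet another $H_2$, and they supply the alternative geodesics needed in the sufficiency direction. The subgraph you describe on $\{u_i,\bar u_i,a_i,b_i,z,z'\}$ is not an $H_2$ and would not by itself force $u_i\neq\bar u_i$.

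Second, and more substantively, the sufficiency direction is where most of the actual work in the paper lies, and your proposal leaves it entirely at the level of ``verify by direct inspection.'' Given a NAE-satisfying assignment, one must check mutual visibility for every same-colored pair, and several cases are not routine: for instance, a vertex $u_i$ and a vertex $b_{i'}$ or $w_j$ of the same color are at distance $3$, and the only usable geodesics are those routed through $z$ or $z'$, exploiting that $z$ and $z'$ have opposite colors; similarly the visibility of pairs inside $\{a_i,b_i,v_j,w_j\}$ is argued by exhibiting an induced $H_2$ on $\{a_i,b_i,z,z',v_j,w_j\}$ whose distances agree with those in $G$. You correctly name the dual obstacle for the necessity direction (no shortcut may bypass an $H_3$ copy), but a complete proof must actually carry out both verifications; as written, the proposal is a correct blueprint of the paper's argument rather than a proof.
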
  

\begin{proof} 
 Given a coloring of $G$, it is possible to test in polynomial time whether it is a mutual-visibility coloring or not. Consequently, the problem is in NP. 
 
 We prove that the {\sc NAE3-SAT} problem, defined below and shown to be NP-complete in~\cite{schafer-1978}, polynomially reduces to {\sc MV-coloring}.
 \begin{quote}  
 A {\sc NAE3-SAT} (\emph{not-all-equal 3-satisfiability}) instance $\Phi$ is defined as a set $X=\{x_1,x_2,\ldots,x_q\}$ of $q$ Boolean variables and a set $C$ of $r$ clauses, each  defined as a set of three literals: every variable $x_i$ corresponds to two literals, that is, $x_i$ (the positive form) and $\bar x_i$ (the negative form). To simplify the notations we will denote by $\{\ell_1, \ell_2, \ell_3\}$ the clause with literals $\ell_i, i\in[3]$, without distinction between the orders in which they are listed. A truth assignment assigns a Boolean value ($True$ or $False$) to each variable, corresponding to a truth assignment of opposite values for the two literals $x_i$ and $\bar {x_i}$: $\bar{x_i}$ is $True$ if and only if $x_i$ is $False$. 
 
 The {\sc NAE3-SAT} problem asks whether there is a truth assignment to the variables such that in no clause all three literals have the same truth value. We will say that such an assignment is \emph{satisfying} and the instance $\Phi $ is \emph{satisfied}.
\end{quote}

In what follows we assume that in any instance of {\sc NAE3-SAT} each clause involves three different variables. Indeed, if a clause involves a single variable and all the literals are equal, then the {\sc NAE3-SAT} problem has a No answer and if the positive and negative form of a variable are present in a clause we can remove the clause from the instance.
If a clause involves only two variables, and the variable appearing two times in the clauses is present in the same form, e.g. $\{l_1,l_1,l_2\}$, then the clause is logically equivalent to $\{l_1,l_2\}$ and it can be substituted with two clauses $\{l_1,l_2, a\}$ and $\{l_1,l_2, \bar a\}$, where $a$ is an additional variable.

\bigskip

We reduce any instance of {\sc NAE3-SAT} to an instance of {\sc MV-Coloring}. Let $\Phi=(X,C)$, with $X = \{x_1,x_2,\ldots,x_q\}$
and $C = \{c_1,c_2,\ldots,c_r\}$, be any instance of {\sc NAE3-SAT}. We construct a graph $G = (V,E)$ in such a way that $G$ has a mutual-visibility coloring with at most $k=2$ colors if and only if $\Phi$ admits a satisfying truth assignment.

For each variable $x_i\in X$, $i\in [q]$, there is a true-setting  subgraph $H_2^{(i)}=(V_i,E_i)$ of $G$ with $V_i=\{u_{i},\bar{u_i},a_i,b_i,p,c\}$ and $E_i=\{u_{i}a_i,\bar{u_i}a_i,u_{i}c,\bar{u_i}c, a_ib_i, pc\}$. Then, each subgraph $H_2^{(i)}$ is isomorphic to a $H_2$ graph. Notice that, by Lemma~\ref{lem:Hn}, the vertices $u_{i}$ and $\bar{u_i}$ of $H_2^{(i)}$ cannot be colored with the same color, as well as the pairs of vertices $a_i,b_i$ and $p,c$.

For each clause $c_j \in C$, $j\in [r]$, there is subgraph $H_3^{(j)}=(V_j,E_j)$ of $G$ isomorphic to a $H_3$ graph where $V_j$ contains the vertices $v_j,w_j,p,c$ and three more vertices depending on $c_j$. Let $T_j$ be the set of these three vertices. If the clause $c_j=\{\ell_{i_1},\ell_{i_2},\ell_{i_3}\}$ contains the variables $x_{i_1},x_{i_2}$, and $x_{i_3}$ then the three vertices of $H_3^{(j)}$ in $T_j$ belong to $H_2^{(i_1)}$, $H_2^{(i_2)}$, and $H_2^{(i_3)}$. In particular if $\ell_{i_1}=x_{i_1}$  ($\ell_{i_1}=\bar x_{i_1}$, resp.)  then $u_{i_1}$ ($\bar u_{i_1}$, resp.) is a vertex of $T_j$.  Similarly for the vertices corresponding to literals $\ell_{i_2}$ and $\ell_{i_3}$. Set $E_j$ contains edges $pc$ and $v_jw_j$ and all the edges connecting  vertices in $\{c, v_j\}$ to vertices in $T_j$. Notice that, by Lemma~\ref{lem:Hn}, vertices in $T_j$ cannot be colored with the same color. The same happens for the pairs of vertices $v_j$ and $w_j$, $j\in[r]$.

There are two more non-adjacent vertices in $V$, that is $z$ and $z'$. These two vertices are adjacent to all the other vertices in the set $\{u_i,\bar{u_i},a_i,v_j~|~i\in[q], j\in[r]\}$. Then

$$ V=\bigcup_{i\in [q]} V_i \cup \bigcup_{j\in [r]}V_j\cup\{z,z'\}$$

$$ E=\bigcup_{i\in [q]} E_i \cup \bigcup_{j\in[r]} E_j\cup\left\{u_{i}\zeta,\bar u_{i}\zeta,a_{i}\zeta,v_i\zeta~|~i\in[q],j\in[r],\zeta\in\{z,z'\}\right\}$$

\bigskip
A representation of the graph $G$ is given in Figure~\ref{fig:NP}.

\bigskip
It is easy to see that the construction of $G$ can be accomplished in polynomial time and that $\diam(G)=4$. All that remains to be proved is that $\Phi$ is satisfied if and only if $G$ has a mutual-visibility coloring of size at most two.

\bigskip
$(\Rightarrow)$ First, suppose that $t: X\rightarrow \{True,False\}$ is a satisfying truth assignment for $C$. 
Then we color the vertices in $G$ with two colors, namely red, and white, depending on $t$. We color the vertices of the following pairs with different colors: $p,c$; $z,z'$; $a_i,b_i$, for each $i\in[q]$; $v_j,w_j$, for each $j\in [r]$.
If $t(x_i)$ is $True$ for a given $i\in [p]$, then  the vertex $u_{i}$ in subgraph $H_2^{(i)}$ is colored red and vertex $\bar{u_i}$ is colored white. Conversely, if $t(x_i)$ is $False$ then  vertex $u_{i}$ is colored white and vertex $\bar {u_i}$ is colored red.
It remains to show that this coloring is a mutual-visibility coloring. \\
Vertices $p$ and $c$ are in mutual-visibility with all other vertices in $H_2^{(i)}$, for each $i\in [q]$, because the color assignment satisfies that of the proof of Lemma~\ref{lem:Hn}. Clearly, vertices $p$ and $c$ are in mutual-visibility with vertex $z$ ($z'$, resp.) through path $p,c,u_1,z$ ($p,c,u_1,z'$, resp.) or path $p,c,\bar{u_1},z$ ($p,c,\bar{u_1},z'$, resp.).\\
More interesting is the mutual-visibility of $p$ and $c$ with vertices $v_j,w_j$, for each fixed $j\in [r]$. These four vertices belong to $H_3^{(j)}$ and since the other three vertices in $T_j$ cannot have the same color because $t$ is a satisfying truth assignment for $C$, the coloring of $H_3^{(j)}$ also satisfies that of the proof of Lemma~\ref{lem:Hn}. Then $p$ and $c$ are in mutual-visibility with vertices $v_j,w_j$.\\
It remains to check the mutual-visibility of the other vertices. Vertex $z$ is clearly in mutual-visibility with each vertex $s$ in $\bigcup_{i\in [q]} V(H_2^{(i)})\cup \bigcup_{ j\in [r]} V(H_3^{(j)})\setminus \{p,c\}$, since any shortest $x,s$-path $P$ has length at most two and and when the length is two at least two vertices in $P$ have different colors. Similarly for $z'$.\\
 Any vertex $x$ in $A=\{u_i, \bar{u_i}~|~i \in [q]\}$ is in mutual-visibility with any vertex $y$ in $B=\{u_i,\bar{u_i},a_i,b_i~|~i \in [q]\}\cup\{v_j,w_j~|~j\in[r]\}$, because if $d(x,y)=2$ we can choose a shortest $x,y$-path $P$ passing through either $z$ or $z'$ to guarantee a two coloring of $P$, and if  $d(x,y)=3$, that is $y=b_i$ or $y=w_j$, for any $i\in [q]$ or any $j\in [r]$, there is a shortest $x,y$-path passing throw either $z$ or $z'$ such that the first two vertices and the last two vertices are colored with different colors.\\
 Finally, any vertex $x$ in $\{a_i, b_i,v_j,w_j~|~i \in [q], j\in[r]\}$ is in mutual-visibility with any other vertex $y$ in the same set because the subgraph $H$ induced by the vertices $\{a_i,b_i,z,z',v_j,w_j\}$ is isomorphic to $H_2$ and the distances between two vertices in $H$ are the same as those in $G$. Since the coloring of the vertices in $H$ is the coloring used in Lemma~\ref{lem:Hn}, $x$ and $y$ are mutually visible.

 \bigskip
 $(\Leftarrow)$ Conversely, suppose that there is a mutual-visibility coloring of the graph $G$ with two colors. Since the subgraphs $H_2^{(i)}$, for each $i\in[q]$, and $H_3^{(j)}$, for each $j\in [r]$, of $G$ are isomorphic
to the graphs $H_2$ and $H_3$, by Lemma~\ref{lem:Hn} all pairs of vertices $p,c$; $u_i,\bar{u_i}$ and $a_i,b_i$,  for each $i\in[q]$; $v_j,w_j$, for each $j\in [r]$, must be colored with different colors. The pair $z,z'$ must also be colored with two colors by the same lemma, because the subgraph induced by vertices $a_1,b_1, z,z',v_1,w_1$ is isomorphic to $H_2$. 

Since the vertices $u_i$ and $\bar{u_i}$ of a $H_2^{(i)}$ subgraph, for each $i\in[q]$, have different colors, we assign the value $True$ to variable $x_i$ if the vertex $u_{i}$ is colored red, whereas we assign the value $False$ to variable $x_i$ if the vertices $u_{i}$  is colored white. Now, by contradiction, assume that the instance $\Phi$ of {\sc NAE3-SAT} is not satisfied. This means that there exists a clause $c_h\in C$, for a certain $h\in[r]$, such that all three literals of $c_h$ have the same truth value. In turn, this implies that all the vertices in $T_h$ have the same color, but this contradicts Lemma~\ref{lem:Hn} applied to subgraph $H_3^{(h)}$. In particular, there would be a vertex in $x\in\{p,c\}$ and a vertex $y\in \{v_h,w_h\}$ that are not in mutual-visibility, since they would have the same color as the vertices in $T_h$.
\end{proof}

\section{Mutual-visibility coloring in glued $t$-ary trees}

As the mutual-visibility coloring problem is NP-complete even for instances $(G, k)$ with $diam(G) = 4$ and $k = 2$, it is worth investigating the problem in special classes of graphs. The exact value of mutual-visibility chromatic number of the glued $t$-ary trees are determined in this section. We start with the special case of the glued binary trees and then extend the result to glued $t$-ary trees. \par
Throughout this section, the vertices of two copies of the binary tree in $GT(r)$ are denoted by $v_{i,j}$ and $v_{i,j}'$, respectively, for $i = 1, 2, \ldots, r - 1$ and $i-1$ is the depth of $v_{i,j}$ and $v'_{i,j}$ in $T_r^{(1)}$ and $T_r^{(2)}$, respectively; and $j$ is the position of the vertex counted from left to right in the drawing.  The quasi-leaves, which are leaf vertices of both rooted binary trees, are denoted by $v_1,v_2,......v_{2^r}$, labeled from left to right in the drawing.  See Figure~\ref{fig:mvc} for the denotation of the vertices in graph $GT(3)$ and graph $GT(4)$.

Let $v_i$ and $v_j$ be two quasi-leaves of $GT(r)$. There exist exactly two geodesic paths between $v_i$ and $v_j$, say $P_{i,j}$, through $V_r^{(1)}$ and $P'_{i,j}$ through $V_r^{(2)}$. Now, $P_{i,j}$ and $P'_{i,j}$ together induces a cycle which we denote by $C_{i,j}$. Let $Q_{i,j}$ and $Q_{i,j}'$ be the paths $P_{i,j} \backslash \{v_i,v_j\}$ and paths $P_{i,j}' \backslash \{v_i,v_j\}$, respectively. Note that any cycle in $GT(r)$ intersects with exactly two quasi-leaves and can always be represented as $C_{i,j}$ in a unique manner.\par

\begin{figure}[thb]
    \centering
   
			\begin{tikzpicture}[vertex/.style= {circle,draw,thick,minimum size=5mm,inner sep=0pt,font=\small},x=1cm,y=1cm]

		\node[vertex, label={[yshift=-1mm]:$v_1$}] (a1) at (0,0) {1}; 
       
	\node[vertex,label={[yshift=-1mm]:$v_2$}] (a2) at (1,0) {1};  
	\node[vertex,label={[yshift=-1mm]:$v_3$}] (a3) at (2,0) {1};  
	\node[vertex,label={[yshift=-1mm]:$v_4$}] (a4) at (3,0) {1};

		\node[vertex,label={[yshift=-1mm]:$v_{2,1}$}] (b1) at (0.5,1) {2}; 
	\node[vertex,label={[yshift=-1mm]:$v_{2,2}$}] (b2) at (2.5,1) {2};  
		\node[vertex,label={[yshift=-1mm]:$v_{1,1}$}] (d1) at (1.5,2) {3}; 
		\node[vertex,label={[yshift=-12mm]:$v_{2,1}'$}] (c1) at (0.5,-1) {3}; 
			\node[vertex,label={[yshift=-12mm]:$v_{2,2}'$}] (c2) at (2.5,-1) {3};  
			\node[vertex,label={[yshift=-12mm]:$v_{1,1}'$}] (e1) at (1.5,-2) {2}; 
		
			\draw (a1) -- (b1) -- (d1)--(b2) -- (a4) -- (c2)--(e1) -- (c1)--(a1);
			\draw (c1)--(a2)--(b1);
			\draw (c2)--(a3)--(b2);

			\node[vertex,label={[yshift=-1mm]:$v_1$}] (a1) at (5,0) {1}; 
			\node[vertex,label={[yshift=-1mm]:$v_2$}] (a2) at (6,0) {1};  
			\node[vertex,label={[yshift=-1mm]:$v_3$}] (a3) at (7,0) {1};  
		 	\node[vertex,label={[yshift=-1mm]:$v_4$}] (a4) at (8,0) {1}; 
	    	\node[vertex,label={[yshift=-1mm]:$v_5$}] (a5) at (9,0) {1};  
		    \node[vertex,label={[yshift=-1mm]:$v_6$}] (a6) at (10,0) {1}; 
	    	\node[vertex,label={[yshift=-1mm]:$v_7$}] (a7) at (11,0) {1}; 
			\node[vertex,label={[yshift=-1mm]:$v_8$}] (a8) at (12,0) {1};

			\node[vertex,label={[yshift=-1mm]:$v_{3,1}$}] (b1) at (5.5,1) {2}; 
			\node[vertex,label={[yshift=-1mm]:$v_{3,2}$}] (b2) at (7.5,1) {2};  
		\node[vertex,label={[yshift=-1mm]:$v_{3,3}$}] (b3) at (9.5,1) {2};  
		\node[vertex,label={[yshift=-1mm]:$v_{3,4}$}] (b4) at (11.5,1) {2}; 
		
	\node[vertex,label={[yshift=-1mm]:$v_{2,1}$}] (d1) at (6.5,2) {1}; 
\node[vertex,label={[yshift=-1mm]:$v_{2,2}$}] (d2) at (10.5,2) {4}; 

\node[vertex,label={[yshift=-1mm]:$v_{1,1}$}] (f1) at (8.5,3) {3}; 
	
		\node[vertex,label={[yshift=-12mm]:$v_{3,1}'$}] (c1) at (5.5,-1) {3}; 
	\node[vertex,label={[yshift=-12mm]:$v_{3,2}'$}] (c2) at (7.5,-1) {3};  
	\node[vertex,label={[yshift=-12mm]:$v_{3,3}'$}] (c3) at (9.5,-1) {3};  
	\node[vertex,label={[yshift=-12mm]:$v_{3,4}'$}] (c4) at (11.5,-1) {3}; 
	
	\node[vertex,label={[yshift=-12mm]:$v_{2,1}'$}] (e1) at (6.5,-2) {4}; 
	\node[vertex,label={[yshift=-12mm]:$v_{2,2}'$}] (e2) at (10.5,-2) {4}; 
	
	\node[vertex,label={[yshift=-12mm]:$v_{1,1}'$}] (g1) at (8.5,-3) {2};

			\draw (a1) -- (b1) -- (d1)--(f1) -- (d2) -- (b4)--(a8) -- (c4) -- (e2)--(g1) -- (e1) -- (c1)--(a1) ;
			\draw (e1) -- (c2) -- (a3)--(b2) -- (a4) -- (c2);
			\draw (e2) -- (c3) -- (a5)--(b3) -- (a6) -- (c3);
			\draw (d1) -- (b2);
			\draw (d2) -- (b3);
			\draw (b4) -- (a7) -- (c4);
				\draw (b1) -- (a2) -- (c1);

		\end{tikzpicture}

    \caption{A mutual-visibility coloring of $GT(2)$ with $3$ colors and $GT(3)$ with $4$ colors. }
  \label{fig:mvc}
\end{figure}
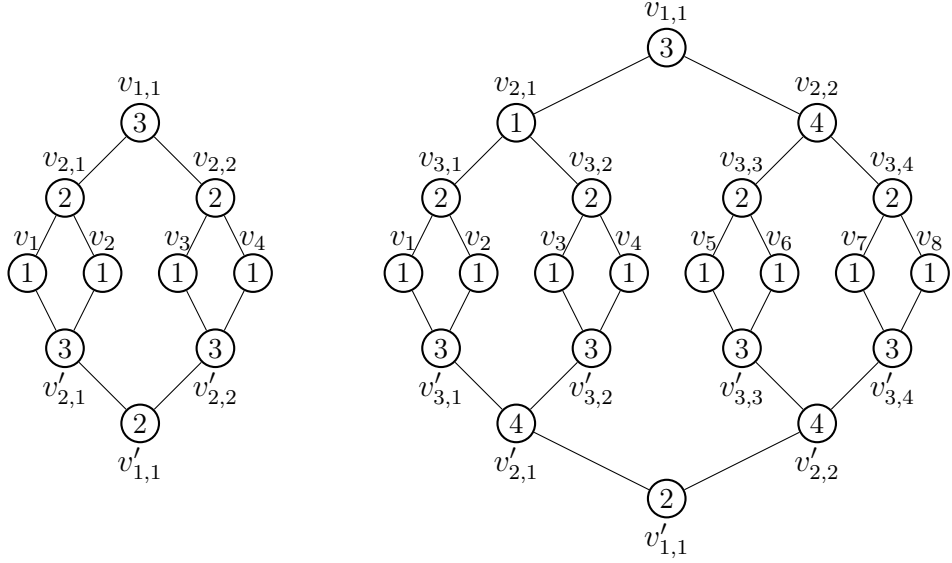
The following observations related to the geodesic paths between any two vertices in $GT(r)$ will be useful for us.

\begin{observation}
Given two vertices of the form $v_{i,j}$ and $v_{k,l}$ in a copy of a binary tree, there exists a unique geodesic path between them.
\end{observation}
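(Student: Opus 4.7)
The plan is to prove that every geodesic in $GT(r)$ between two vertices $v_{i,j}$ and $v_{k,l}$ of $V_r^{(1)}$ must lie entirely inside $T_r^{(1)}$. Once this is established, uniqueness is immediate, since $T_r^{(1)}$ is a tree and paths between any two vertices in a tree are unique.

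To carry this out, I would argue by contradiction and assume that some geodesic $P$ from $v_{i,j}$ to $v_{k,l}$ in $GT(r)$ contains a vertex of $V_r^{(2)}$. Because the only edges joining $V_r^{(1)}$ to $V_r^{(2)}$ have a quasi-leaf as endpoint, $P$ must enter the region $V_r^{(2)}\cup L(GT(r))$ at some quasi-leaf $v_a$ and leave it at some quasi-leaf $v_b$. If $v_a=v_b$, the excursion into $V_r^{(2)}$ and back is a closed walk of positive length, which can be deleted to yield a strictly shorter $v_{i,j},v_{k,l}$-walk; this contradicts $P$ being a geodesic. Hence we may assume $v_a\neq v_b$.

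Next, I would use the symmetry of the construction: $T_r^{(1)}$ and $T_r^{(2)}$ are two isomorphic copies of $T_{r,2}$ sharing exactly the quasi-leaves as their leaves, so $d_{T_r^{(2)}}(v_a,v_b)=d_{T_r^{(1)}}(v_a,v_b)$. Since the $v_a,v_b$-subpath of $P$ traverses $T_r^{(2)}$, its length is at least $d_{T_r^{(2)}}(v_a,v_b)$, and therefore
$$|P|\ \geq\ d_{T_r^{(1)}}(v_{i,j},v_a)+d_{T_r^{(1)}}(v_a,v_b)+d_{T_r^{(1)}}(v_b,v_{k,l}).$$
By the triangle inequality in the tree $T_r^{(1)}$, the right-hand side is at least $d_{T_r^{(1)}}(v_{i,j},v_{k,l})$.

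The one step requiring care, and which I expect to be the main obstacle, is turning this into a \emph{strict} inequality. Equality in $d_{T_r^{(1)}}(v_{i,j},v_a)+d_{T_r^{(1)}}(v_a,v_b)=d_{T_r^{(1)}}(v_{i,j},v_b)$ would require $v_a$ to lie on the unique $T_r^{(1)}$-path from $v_{i,j}$ to $v_b$; but $v_a$ has degree one in $T_r^{(1)}$, so it can appear on a tree path only as an endpoint. Since $v_{i,j}\notin L(GT(r))$, this forces $v_a=v_b$, contradicting the assumption of case two. Hence the inequality is strict, giving $|P|>d_{T_r^{(1)}}(v_{i,j},v_{k,l})\geq d_{GT(r)}(v_{i,j},v_{k,l})$, contradicting that $P$ is a geodesic. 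Thus no geodesic leaves $T_r^{(1)}$, and the unique $T_r^{(1)}$-path between $v_{i,j}$ and $v_{k,l}$ is the unique geodesic in $GT(r)$.
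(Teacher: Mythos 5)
Your proof is correct. Note that the paper states this as an \emph{Observation} and offers no proof at all, so there is nothing to compare against; your argument (a geodesic between two non-quasi-leaf vertices of one copy cannot profitably detour through the other copy, hence coincides with the unique tree path) is exactly the justification the authors leave implicit. The only spot worth a sentence of polish is the bound on the $v_a,v_b$-subpath: as you have set it up, with $v_a$ preceding the \emph{first} and $v_b$ following the \emph{last} vertex of $V_r^{(2)}$ on $P$, that subpath need not lie entirely in $T_r^{(2)}$ (it may alternate between the two copies several times). This is easily repaired: since every quasi-leaf has degree two in $GT(r)$, the subpath decomposes at its quasi-leaves into segments each contained in one copy, each segment realizes the common tree-distance between its endpoint quasi-leaves, and the triangle inequality in $T_r^{(1)}$ then gives length at least $d_{T_r^{(1)}}(v_a,v_b)$; your strictness argument via the degree-one status of $v_a$ in $T_r^{(1)}$ then goes through unchanged.
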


\begin{observation}
Given two vertices $v_{i,j}\in T^{(1)}_r$, $v'_{i,j}\in T^{(2)}_r$ and any leaf vertex $v_a$ of  $T_{v_{i,j}}$ (equivalently, $T_{v'_{i,j}})$, then there exists a unique geodesic path between $v_{i,j}$ and $v'_{i,j}$, which passes through $v_a$.
\end{observation}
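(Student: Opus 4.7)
The plan is to first compute $d_{GT(r)}(v_{i,j}, v'_{i,j})$ by characterizing which quasi-leaves can serve as ``crossing points'' on a geodesic between the two trees, and then reduce both existence and uniqueness of a geodesic through $v_a$ to the uniqueness of paths inside a single tree.

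For the distance, I would use that $V(T^{(1)}_r)\cap V(T^{(2)}_r)=L(GT(r))$, so every $v_{i,j}$--$v'_{i,j}$ walk in $GT(r)$ must traverse at least one quasi-leaf. For any quasi-leaf $v_b$, the distance in $T^{(1)}_r$ from $v_{i,j}$ to $v_b$ is at least $r-i+1$, with equality exactly when $v_b$ is a descendant of $v_{i,j}$; the symmetric bound holds in $T^{(2)}_r$ between $v'_{i,j}$ and $v_b$. Together these give $d_{GT(r)}(v_{i,j}, v'_{i,j})\geq 2(r-i+1)$, and equality is attained through any descendant quasi-leaf: concatenating the unique downward path from $v_{i,j}$ to $v_a$ inside $T^{(1)}_r$ with the unique upward path from $v_a$ to $v'_{i,j}$ inside $T^{(2)}_r$ produces a walk of length $2(r-i+1)$ whose internal vertices lie strictly between depth $i-1$ and depth $r$ and are therefore not quasi-leaves, so it is a simple path, and matching the lower bound it is a geodesic. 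This establishes existence.

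For uniqueness, any geodesic from $v_{i,j}$ to $v'_{i,j}$ passing through $v_a$ decomposes at $v_a$ into a $v_{i,j}$--$v_a$ geodesic and a $v_a$--$v'_{i,j}$ geodesic of $GT(r)$, each of length $r-i+1$. The key step, which I expect to be the main obstacle, is showing that a $v_{i,j}$--$v_a$ geodesic of length $r-i+1$ in $GT(r)$ cannot use any edge of $T^{(2)}_r$: any such use would force the walk to first reach some quasi-leaf $v_b$ via $T^{(1)}_r$, spending already $r-i+1$ edges by the bound above, and then take at least one additional edge inside $T^{(2)}_r$ to leave $v_b$, contradicting the total length $r-i+1$ (unless $v_b=v_a$, in which case the walk already agrees with the $T^{(1)}_r$-path). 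Once contained in $T^{(1)}_r$, the geodesic is pinned down by the uniqueness of paths in trees, and the symmetric argument inside $T^{(2)}_r$ fixes the other half, yielding a single geodesic through $v_a$.
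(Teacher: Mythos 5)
Your proof is correct and complete: the paper states this as an unproved Observation, so there is no authorial proof to compare against, and your argument (lower-bounding $d(v_{i,j},v'_{i,j})$ by $2(r-i+1)$ via the first/last quasi-leaf on any connecting walk, realizing that bound through any descendant quasi-leaf $v_a$, and then pinning down each half of a geodesic through $v_a$ to the unique tree path in $T^{(1)}_r$ resp.\ $T^{(2)}_r$) is exactly the justification the authors leave implicit. The only nitpick is the phrase that the internal vertices ``lie strictly between depth $i-1$ and depth $r$'': $v_a$ itself is an internal vertex at depth $r$, so you should say that $v_a$ is the only quasi-leaf on the path and the remaining internal vertices split between the disjoint sets $V_r^{(1)}$ and $V_r^{(2)}$, which still gives simplicity.
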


\begin{observation}
    Given two vertices $v_{i,j}\in T^{(1)}_r$, $v'_{i,l}\in T^{(2)}_r$ and any leaf vertex $v_a$ of  $T_{v_{i,j}}$ (equivalently, $T_{v'_{i,l}})$, then there exists a unique geodesic path between $v_{i,j}$ and $v'_{i,l}$, which passes through $v_a$. Also each of these geodesic paths, either passes through $v'_{i,j}$ or $v_{i,l}$.
\end{observation}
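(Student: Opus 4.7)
The plan is to exploit the tree structure of $T^{(1)}_r$ and $T^{(2)}_r$ together with the fact that the two copies are joined only at quasi-leaves. First I would observe that any $v_{i,j}$-$v'_{i,l}$ path must cross the gluing at some quasi-leaf $v_b$, and once $v_b$ is fixed, the subpath from $v_{i,j}$ to $v_b$ inside $T^{(1)}_r$ is uniquely determined, being a path in a tree, and the subpath from $v_b$ to $v'_{i,l}$ inside $T^{(2)}_r$ is also uniquely determined. This immediately gives uniqueness of the geodesic through any prescribed crossing leaf $v_a$.

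For the existence part and to verify that such a path is actually a geodesic, I would compute the two subpath lengths in each case. When $v_a$ is a leaf of $T_{v_{i,j}}$, the $T^{(1)}_r$-subpath descends from $v_{i,j}$ to $v_a$ in exactly $r-i+1$ steps; symmetrically, when $v_a$ is a leaf of $T_{v'_{i,l}}$, the $T^{(2)}_r$-subpath has length $r-i+1$. Pairing such a minimal-length subpath with the other subpath, which travels through a depth-$(i-1)$ mirror vertex, then up to the least common ancestor of $v_{i,j}$ and $v_{i,l}$, then down, yields a path of total length $2(r-d)$, where $d$ is the depth of this least common ancestor. A direct counting argument then shows that any crossing through a quasi-leaf lying outside both $T_{v_{i,j}}$ and $T_{v'_{i,l}}$ produces a strictly longer path, so the constructed paths are genuine geodesics.

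For the final assertion, I would use that the identification of leaves preserves position: the unique ancestor of $v_a$ at depth $i-1$ in $T^{(2)}_r$ is $v'_{i,j}$ whenever $v_a$ is a leaf of $T_{v_{i,j}}$ in $T^{(1)}_r$. Hence the up-path in $T^{(2)}_r$ from $v_a$ must visit $v'_{i,j}$ before ascending further toward the common ancestor with $v'_{i,l}$ and then descending; symmetrically, if $v_a$ is a leaf of $T_{v'_{i,l}}$, the geodesic visits $v_{i,l}$ in $T^{(1)}_r$. The main obstacle I anticipate is the careful length comparison that rules out geodesics crossing through quasi-leaves outside the two prescribed subtrees, which requires clean accounting of the least common ancestor depths in each copy of the binary tree.
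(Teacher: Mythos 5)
The paper states this as an unproved Observation, so there is no authorial argument to compare yours against; your proposal is essentially the natural justification and is correct in outline. Your length accounting is right: routing through a quasi-leaf $v_a$ of $T_{v_{i,j}}$ costs $(r-i+1)$ in $T^{(1)}_r$ plus $(r-d)+(i-1-d)$ in $T^{(2)}_r$, for a total of $2(r-d)$ with $d$ the depth of the least common ancestor of $v_{i,j}$ and $v_{i,l}$, while a crossing outside $T_{v_{i,j}}\cup T_{v'_{i,l}}$ is strictly longer; and the position-preserving identification of leaves does force the $T^{(2)}_r$-leg to ascend through $v'_{i,j}$ (respectively, the $T^{(1)}_r$-leg through $v_{i,l}$), which gives the final assertion. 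The one step you treat too casually is the opening uniqueness claim: you tacitly assume that a geodesic meets the quasi-leaf level exactly once and that its two legs stay inside $T^{(1)}_r$ and $T^{(2)}_r$ respectively. Neither is automatic from ``paths in a tree are unique,'' since a path in $GT(r)$ may re-enter a copy after crossing. Both facts do hold and can be supplied in a sentence each: the $v_{i,j}$--$v_a$ leg of a geodesic has length equal to the depth difference $r-i+1$, hence must be the monotone descending path in $T^{(1)}_r$; and any excursion of the $v_a$--$v'_{i,l}$ leg back into $T^{(1)}_r$ forces an additional full descent to depth $r$ before re-crossing, which your own length comparison shows is strictly more expensive than the direct climb in $T^{(2)}_r$. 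With those two remarks added, the argument is complete.
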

\begin{observation}
    Given two vertices $v_{i,j}\in T^{(1)}_r$, $v'_{k,l}\in T^{(2)}_r$ where $i<k$ and any leaf vertex $v_a$ of  $T_{v_{k,l}}$, then there exists a unique geodesic path between $v_{i,j}$ and $v'_{k,l}$, which passes through $v_a$ and each of these geodesic paths passes through $v_{k,l}$ also. Also, a similar statement holds for $k < i$.
\end{observation}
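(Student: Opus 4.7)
The argument directly parallels the preceding observations. Since the only vertices common to $T^{(1)}_r$ and $T^{(2)}_r$ are the quasi-leaves, any $v_{i,j}$--$v'_{k,l}$ walk in $GT(r)$ that contains the quasi-leaf $v_a$ decomposes at $v_a$ into a walk in $T^{(1)}_r$ and a walk in $T^{(2)}_r$. In each tree, the shortest walk between two of its vertices is unique, so any putative geodesic through $v_a$ must be the concatenation of the unique tree path from $v_{i,j}$ to $v_a$ in $T^{(1)}_r$ with the unique tree path from $v_a$ to $v'_{k,l}$ in $T^{(2)}_r$. This already yields uniqueness, once existence is in hand.

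For the claim that this path passes through $v_{k,l}$, I would reason entirely inside $T^{(1)}_r$. The tree path from $v_{i,j}$ to $v_a$ goes from $v_{i,j}$ up to the lowest common ancestor of $v_{i,j}$ and $v_a$, and then down to $v_a$. Since $v_a\in V(T_{v_{k,l}})$ and $i<k$, this common ancestor is either $v_{i,j}$ itself (when $v_{i,j}$ is an ancestor of $v_{k,l}$) or a strict ancestor of $v_{k,l}$. In either situation the descending portion of the path must enter the subtree $T_{v_{k,l}}$ through its root, which is $v_{k,l}$.

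What remains is existence, i.e.\ that the concatenated path actually realizes $d_{GT(r)}(v_{i,j},v'_{k,l})$. A direct computation gives its length as $d_{T^{(1)}_r}(v_{i,j},v_{k,l})+2(r-k+1)$, a value independent of the chosen $v_a\in V(T_{v_{k,l}})$. To verify optimality I would consider an arbitrary crossing quasi-leaf $v_b$ and bound $d_{T^{(1)}_r}(v_{i,j},v_b)+d_{T^{(2)}_r}(v_b,v'_{k,l})$ from below by this value. The key quantitative point is that choosing $v_b\notin V(T_{v'_{k,l}})$ forces the second summand to exceed $r-k+1$ by twice the extra depth ascended above $v'_{k,l}$ in $T^{(2)}_r$, and the hypothesis $i<k$ prevents any compensating saving in the first summand. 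A brief case split on whether $v_{i,j}$ is an ancestor of $v_{k,l}$ and on the position of $v_b$ makes this precise.

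The symmetric statement for $k<i$ follows by interchanging the roles of $T^{(1)}_r$ and $T^{(2)}_r$: with $v_a$ taken as a leaf of $T_{v_{i,j}}$ instead, the unique geodesic through $v_a$ then passes through $v'_{i,j}$. The only non-routine step in the whole proof is the length comparison in the third paragraph; the rest is standard manipulation of paths in trees.
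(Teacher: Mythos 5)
The paper states this result as an unproved Observation, so there is no in-text proof to compare against; judged on its own terms, your plan is correct and would complete into a valid proof. The decomposition at the crossing quasi-leaf, the lowest-common-ancestor argument showing that the $T^{(1)}_r$-segment must enter $T_{v_{k,l}}$ through its root $v_{k,l}$, and the length formula $d_{T^{(1)}_r}(v_{i,j},v_{k,l})+2(r-k+1)$ are all sound. Two points deserve care when you write the details out. First, state the decomposition for geodesics rather than arbitrary walks: a walk through $v_a$ may recross the quasi-leaf level several times, but a shortest path cannot, since distances between quasi-leaves agree in the two copies (so any detour into the other copy is weakly longer) and the only quasi-leaf on the tree path from an internal vertex to $v_a$ is $v_a$ itself. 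Second, the phrase ``prevents any compensating saving'' is slightly loose: writing $e$ and $f$ for the depths of $\mathrm{LCA}(v_{i,j},v_{k,l})$ and $\mathrm{LCA}(v_{i,j},v_b)$ in $T^{(1)}_r$, and $k-1-m$ for the depth of $\mathrm{LCA}(v_b,v'_{k,l})$, moving the crossing leaf off $T_{v_{k,l}}$ costs $2m$ in the second summand but can genuinely save $2(f-e)$ in the first; what you must show is $m\ge f-e$, with equality only when $v_b$ descends from $v_{k,l}$. This follows from the three-point LCA identity (among the three pairwise LCAs the two shallowest coincide), which in the case $f>e$ forces $m=(k-1)-e$, and then $f\le i-1<k-1$ gives $f-e<m$. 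That single inequality is the precise content of the case split you allude to, and it is exactly where the hypothesis $i<k$ is used.
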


\begin{lemma} \label{lem}
If $S$ is a mutual-visibility set of $GT(r)$, then $|{S \cap V(C_{i,j}) }| \leq 3 $.
\end{lemma}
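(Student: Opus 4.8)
The plan is to prove the contrapositive in a sharpened form: if four vertices of $C_{i,j}$ all lie in a set $S$, then two of them, say $u$ and $w$, have the property that \emph{every} geodesic of $GT(r)$ between $u$ and $w$ contains one of the four vertices as an internal vertex; such a pair is not mutually visible, so $S$ cannot be a mutual-visibility set. Recall that $C_{i,j}$ is an even cycle of length $2m$ (with $m=d(v_i,v_j)$) whose two arcs are $P_{i,j}$ in the first copy and $P'_{i,j}$ in the second, meeting only in the quasi-leaves $v_i,v_j$; I write $a$ and $a'$ for the two apexes, i.e.\ the highest vertices of $P_{i,j}$ and $P'_{i,j}$.

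The only fact I need about same-copy pairs is this: if $u,w$ lie on one arc and $\{u,w\}\neq\{v_i,v_j\}$, then the unique geodesic between them is the sub-arc joining them. Indeed, at least one of $u,w$ is a non-quasi-leaf tree vertex, and any $u$--$w$ path reaching the other copy would have to leave and re-enter through quasi-leaves lying strictly below that vertex, which is strictly longer; so the geodesic stays in one copy and is unique by the first Observation. Consequently three selected vertices on one arc whose two extremes are not both quasi-leaves always give a blocked pair. Using this I would eliminate every configuration of four selected cycle vertices except one, splitting on the number of selected quasi-leaves. If both $v_i,v_j$ are selected, then either the two remaining vertices share an arc (and $v_i$ with the farther one is blocked by the nearer one) or they lie on different arcs (and then each arc of the pair $v_i,v_j$ carries a selected interior vertex, so $v_i,v_j$ are not mutually visible, since these two quasi-leaves have only the two arc-geodesics). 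If exactly one quasi-leaf is selected, some arc carries it together with two selected interior vertices, again a blocked triple. If no quasi-leaf is selected but some arc carries three selected interior vertices, the extremes are blocked by the middle one. The single surviving configuration is four selected interior vertices, two on each arc, none a quasi-leaf.

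The crux is this balanced case, and here I would invoke the description of cross-copy geodesics given by the three subsequent Observations: a geodesic from a first-copy cycle vertex $u$ to a second-copy cycle vertex $w$ descends from $u$ to a common quasi-leaf of $T_u$ and $T_w$ and ascends to $w$, and those Observations pin down exactly which cycle vertices (an apex, a mirror vertex, or a vertex $v_{k,l}$) such a geodesic is forced to traverse. Writing $x_a,x_b$ for the first-copy pair and $y_c,y_d$ for the second-copy pair, I would case on their positions relative to the apexes --- whether each pair lies wholly on the $v_i$-side, wholly on the $v_j$-side, or straddles its apex --- and in each case select a ``diagonal'' pair, one from each copy lying on opposite sides. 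I would then verify geodesic by geodesic that every geodesic between the diagonal pair passes through one of the other three selected vertices: a route through $a$ meets the remaining first-copy vertex, a route through $a'$ meets the remaining second-copy vertex, and a low route descending to a single shared quasi-leaf is caught by the deeper selected vertex of the containing subtree. This blocked diagonal pair yields the contradiction.

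The step I expect to be the main obstacle is exactly this last one. Cross-copy pairs typically admit several geodesics, including ones that leave $C_{i,j}$ entirely by routing through a sibling quasi-leaf, so reasoning inside the abstract cycle (where the bound would merely reduce to the value three for cycles) is not sufficient: an off-cycle detour could in principle rescue a pair that looks blocked on the cycle. The real content is to use the cross-copy Observations to show that every such detour still passes through one of the four chosen vertices, so that the blocking depends only on $S\cap V(C_{i,j})$ and not on the rest of $S$; arranging the finitely many positional sub-cases so that a single diagonal pair works in each is where the care is required.
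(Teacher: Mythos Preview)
Your proposal is correct and follows essentially the same route as the paper: a case split on how many of the two quasi-leaves $v_i,v_j$ lie in $S$, using uniqueness of same-arc geodesics for the unbalanced cases and the cross-copy Observations for the balanced case of two interior vertices on each arc. The paper's proof is in fact terser than yours---it dispatches the balanced case in one line by appeal to ``the above observations''---so your explicit diagonal-pair plan and your honest flag about off-cycle detours are a genuine expansion of what the paper writes, not a different argument.
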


\begin{proof}
Assume the contrary that there exists a mutual-visibility set $S$ of $GT(r)$ such that $|S \cap V(C_{i,j})| > 3$.\par

If both quasi-leaves in $C_{i,j}$ are present in $S$, then $|S \cap V(Q_{i,j})| \leq 1$ and $|S \cap V(Q_{i,j}')| \leq 1$. Also, $S$ cannot simultaneously intersect $V(Q_{i,j})$ and $V(Q_{i,j}')$ and hence $|S \cap V(C_{i,j})| \leq 3$. If one of the quasi-leaves in $C_{i,j}$ is present in $S$, then again $|S \cap V(Q_{i,j})| \leq 1$ and $|S \cap V(Q_{i,j}')| \leq 1$, and hence $|{S \cap V(C_{i,j}) }| \leq 3$.\par

Now consider the case where both the quasi-leaves are not present in $S$. Since the quasi-leaves are not present in $S$, then since $|S \cap V(Q_{i,j})| \leq 2$ and $|S \cap V(Q_{i,j}')| \leq 2$, we have $|{S \cap V(C_{i,j}) }| \leq 4$. But, if there are two vertices each from both $Q_{i,j}$ and $Q'_{i,j}$, say $v_{i,j}, v_{k,l}, v'_{a,b}$ and $v'_{c,d}$, then from the above observations, at least one pair of these vertices will not be mutually-visible, which is a contradiction. Hence, the lemma.
\end{proof}
We now turn our focus to determining the exact  value of $\chi_{\mu}(GT(r))$. Observe that $\chi_{\mu}(GT(1))=\chi_{\mu}(C_4)=2$.

\begin{theorem} \label{binary}
For $r \geq 2$,
	$$\chi_\mu(GT(r))=\begin{cases}
        2(r-i) + 3, & 2^{i-1}+i-2 \leq r \leq 2^{i-1}+2^{i-2}+i-3\\
	    2(r-i) + 2, & 2^{i-1}+2^{i-2}+i-2 \leq r \leq 2^i + i - 2       
	\end{cases}$$
\end{theorem}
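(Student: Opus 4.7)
The plan is to prove matching upper and lower bounds, organized around the parameter $i$ since the target formula is piecewise in $r$. Both directions will need case analysis on whether $r$ lies in the first range ($+3$) or the second range ($+2$).

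For the upper bound, I would construct an explicit mutual-visibility coloring. Guided by the examples in Figure~\ref{fig:mvc}, the construction assigns one color to all quasi-leaves, while the two subtrees $T_r^{(1)}$ and $T_r^{(2)}$ are colored in a layered fashion. Layers near the quasi-leaves, which contain many vertices, can reuse a small palette because any two same-colored vertices have alternate geodesics available through the opposing subtree. Layers near the roots, which contain few vertices, essentially need one new color per layer because the relevant geodesics are forced through a single ancestor. The transition between ``cheap'' and ``expensive'' layers is controlled by the layer width, producing $2(r-i)+3$ or $2(r-i)+2$ colors after careful bookkeeping, with the constant offset depending on which sub-range of $r$ we are in. Verifying that the proposed coloring is mutual-visibility reduces, by Observations above, to checking the coloring on representative geodesics of each length; the multiplicity of geodesics coming from the two-copy structure is what makes the scheme work.

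For the lower bound, the main tool is Lemma~\ref{lem}. Applied to the longest cycle $C_{1,2^r}$, which has length $4r$ and passes through both roots, the lemma gives $\chi_\mu(GT(r)) \geq \lceil 4r/3 \rceil$; this already matches the theorem for several small values of $r$ but falls short by one in others (for instance at $r=6$ the cycle bound gives $8$ while the formula demands $9$). To close this gap, I would combine the cycle lemma with the structural observation (immediate from the Observations above) that two same-colored vertices at different depths in the same subtree have a \emph{unique} geodesic lying entirely within that subtree, so no internal vertex of this geodesic can share their color. Applied layer by layer, this forbids deep ``monochromatic spines'' and, combined with Lemma~\ref{lem} across a well-chosen family of cycles, should yield the full lower bound. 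The argument will have to exploit the narrowness of the top of each subtree, since the root and its immediate descendants form a bottleneck that cannot be resolved by the cycle lemma alone.

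The main obstacle will be matching the precise thresholds $2^{i-1}+i-2$ and $2^{i-1}+2^{i-2}+i-2$. These expressions suggest that, for each $i$, there is a capacity of roughly $2^i$ layers that can be packed using a fixed pool of colors before a new one is forced; I would pin this down by careful accounting of how many descendant quasi-leaves a single color class can control per layer, which naturally produces the $2^{i-1}$ and $2^{i-2}$ terms. The transition point where $\chi_\mu$ jumps by only $1$ rather than $2$, namely at $r=2^{i-1}+2^{i-2}+i-2$, corresponds to a specific moment where a previously-constrained color becomes usable again on the opposing side of the tree. I expect this boundary case to require an individualized argument rather than to follow from a clean induction on $r$, since $GT(r)$ does not arise from $GT(r-1)$ by a simple local modification. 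Extending the result from glued binary to glued $t$-ary trees should then be a matter of replacing ``width $2^{i-1}$'' by ``width $t^{i-1}$'' throughout both the construction and the bookkeeping.
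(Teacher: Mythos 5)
Your lower-bound plan stalls exactly where the theorem is hardest. You correctly observe that Lemma~\ref{lem} applied to the single cycle $C_{1,2^r}$ only yields $\chi_\mu(GT(r))\geq\lceil 4r/3\rceil$, which falls short, but the remedy you propose --- forbidding monochromatic ``spines'' inside one copy of the tree, combined with ``a well-chosen family of cycles'' --- is left entirely undeveloped and is not the mechanism that produces the thresholds $2^{i-1}+i-2$ and $2^{i-1}+2^{i-2}+i-2$. (Your unique-geodesic observation only shows that a color class meets each root-to-quasi-leaf path in at most two non-quasi-leaf vertices; that does not yield an exponential-in-$i$ bound.) The argument that works is a different, quantitative use of the same lemma: with a palette of $N$ colors, a cycle of length $4r$ on which every color appears at most three times forces at least $4r-2N$ colors to appear three times on it. One applies this to the nested family $C_{1,2^r}$, then $C_{2^{r-1},2^{r-1}+1}$, then $C_{t2^{r-2},t2^{r-2}+1}$ for $t=1,3$, and so on, counting only the vertices new to each cycle, and proves that the resulting sets of ``tripled'' colors are pairwise disjoint --- a color tripled on two such cycles would occur four times on one of four auxiliary cycles $C_{a,b}$, contradicting Lemma~\ref{lem}. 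Summing the disjoint contributions gives at least $2^{i+1}-2$ tripled colors when $N=2(r-i)+1$, exceeding the palette since $r\leq 2^i+i-2$; a parallel count with $N=2(r-i)+2$ rules out that palette size on the $+3$ range. None of this counting, nor the disjointness claim it hinges on, appears in your sketch.

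The upper bound also has a real gap: the construction as you describe it does not achieve the claimed number of colors. Giving ``one new color per layer'' to the $i-1$ layers near each root costs $2(i-1)$ colors on top of the $2(r-i)$ deep-layer colors and the quasi-leaf color, i.e.\ about $2r-1$ in total, far more than $2(r-i)+3$. The construction that works colors each deep layer (depth $r-k$ for $k\leq r-i$) of one copy monochromatically and \emph{reuses that same color on a single near-root vertex of the other copy}, taken in breadth-first order; the near-root vertices are thus absorbed one at a time into the $2(r-i)$ deep-layer color classes, and the thresholds in $r$ record precisely when the $2^{i-1}-1$ vertices of depth at most $i-2$, together with enough of the depth-$(i-1)$ layer, have been absorbed, leaving one or two further colors for the leftovers. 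Your closing remark about ``a capacity of roughly $2^i$ layers packed into a fixed pool'' gestures at this but contradicts your one-new-color-per-layer description; without the explicit cross-pairing and a verification (via the Observations and Lemma~\ref{lem}) that the result is a mutual-visibility coloring, the upper bound is not established.
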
 
    
 \begin{proof}
We begin the proof by establishing a coloring of $GT(r)$ using the number of colors specified in the theorem.\par

\ni{\bf Case 1: $2^{i-1}+2^{i-2}+i-2 \leq r \leq 2^i + i - 2$}.\\
In this case, assign color $1$ to all the quasi-leaves, color $2k$ to all the vertices of depth $r-k$ in $V_r^{(1)}$ and to the $k^{th}$ vertex in the sequence $\{v'_{1,1}, v'_{2,1},v'_{2,2}, v'_{3,1},$ $v'_{3,2}, v'_{3,3}, v'_{3,4},\ldots\}$, for $k = 1, 2, \ldots, r-i$. Similarly, assign color $2k+1$ to all vertices of depth $r-k$ in $V_r^{(2)}$ and to the $k^{th}$ vertex in the sequence $\{v_{1,1}, v_{2,2},v_{2,1}, v_{3,4}, v_{3,3}, v_{3,2}, v_{3,1},\ldots\}$, for $k = 1, 2, \ldots, r-i$. Here all vertices of depth $0,1,\ldots,i-2$ and $r-1,r-2,\ldots,r-(r-i)=i$ are completely colored and vertices of depth $i-1$ are partially colored. Among the vertices of depth $i-1$ in $V_r^{(2)}$, $v'_{i,1},v'_{i,2},\ldots,v'_{i,j}$ will be already colored, where $j=r-i-(2^{i-1}-1) \geq 2^{i-2}-1$, (since, $r \geq 2^{i-1}+2^{i-2}+i-2$). Similarly, in $V_r^{(1)}$, $v_{i,a},v_{i,a-1},\ldots,v_{i,a-j-1}$ will be already colored, where $a=2^{i-1}$ and $j \geq 2^{i-2} -  1$. When $r \geq 2^{i-1}+2^{i-2}+i - 1$, we have $j \geq 2^{i-2}$ so that there are fewer than $2^{i-2}$ uncolored vertices of depth $i-1$. In this case, color all the remaining uncolored vertices with color $2(r-i)+2$. When $r = 2^{i-1}+2^{i-2}+i - 2$, color $v_{i-1,j+1}$ with color 1 and all the remaining vertices with color $2(r-i)+2$ to obtain a mutual-visibility coloring of $GT(r)$ with $2(r-i)+2$ colors.\\
\ni{\bf Case 2: $2^{i-1}+i-2 \leq r \leq 2^{i-1}+2^{i-2}+i-3$}.\\
\ni{\bf Sub Case 1: $r=2^{i-1}+i-2$}.\\
In this case, we proceed with the same coloring pattern given in Case 1 up to $k=r-i+1$ so that the entire vertices of $GT(r)$ are colored using  $2(r-i)+3$ colors.\\
\ni{\bf Sub case 2: $2^{i-1}+i-1 \leq r \leq 2^{i-1}+2^{i-2}+i-3$}.\\
The coloring procedure is the same as Case 1, where the only difference is that more than half of the vertices of depth $i-1$ (all vertices when $r=2^{i-1}+i-1$) will be uncolored. We can use one color each for these uncolored vertices in $V_r^{(1)}$ and $V_r^{(2)}$ so that the resulting coloring is a mutual-visibility coloring of $GT(r)$ by Lemma \ref{lem}. 

To prove the converse, first we prove that $2(r-i)+1$ colors are not enough for a mutual-visibility coloring of $GT(r)$, where $i$ is the unique integer such that $2^{i-1} + i-2 \leq r \leq 2^i+i-2$. Assume the contrary that there exists a mutual-visibility coloring with $2(r-i)+1$ colors.\par

Consider the cycle $C_{1,2^r}$. By Lemma \ref{lem}, a color can be used at most 3 times to color the vertices of this cycle. Since we have only $2(r-i)+1$ colors, and there are $4r$ vertices in $C_{1,2^r}$, at least $4r - 2[2(r-i)+1] = 4i-2$ colors must be used at least three times. Let $\chi_0$ be the collection of colors which are used three times to color the vertices of $C_{1,2^r}$ so that $|\chi_0|\geq4i-2$. Now, consider the cycle $C_{2^{r-1},2^{r-1}+1}$, which is also a cycle of length $4r$. Note that, $|V(C_{1,2^r}) \cap V(C_{2^{r-1},2^{r-1}+1})| = 6$ and there are $4r-6$ vertices in $V(C_{2^{r-1},2^{r-1}+1}) \setminus V(C_{1,2^r}) $. Similar to the discussion above, at least $4i - 8$ colors  must be used three times to color these vertices. Let $\chi_1$  be the collection of colors which are used three times to color the vertices in $V(C_{2^{r-1},2^{r-1}+1}) \setminus V(C_{1,2^r})$ so that $|\chi_1|\geq4i - 8$. We claim that $\chi_0  \cap \chi_1  = \phi$. On the contrary, if $c \in \chi_0  \cap \chi_1$ then $c$ must be used at least two times in the geodesic path $v_{1,1}-v_1-v'_{1,1}$ or in the geodesic path $v_{1,1}-v_{2^r}-v'_{1,1}$. Similarly, $c$ must be used at least two times in the geodesic $v_{3,2}-v_{2^{r-1}}-v'_{3,2}$ or $v_{3,3}-v_{2^{r-1}+1}-v'_{3,3}$. But then, in one of the cycles $C_{1,2^{r-1}}, C_{1,2^{r-1}+1}, C_{2^{r-1},2^r}$ or $C_{2^{r-1}+1,2^r}$, $c$ will be used four times which contradicts Lemma \ref{lem}. Therefore,$\chi_0  \cap \chi_1  = \phi$.

Now, consider the cycles 
$C_{t2^{r-2},t2^{r-2}+1}$, for $t=1,3$. These two cycles have $4r-12$ vertices each which are not part of both the previous cycles considered. Similar to the case above we can find a set of colors $\chi_2$ which are used three times either in $V(C_{2^{r-2},2^{r-2}+1}) \setminus \{ V (C_{1,2^r})\cup V( C_{2^{r-1},2^{r-1}+1})\}$ and $V(C_{2^{r-1}+2^{r-2},2^{r-1}+2^{r-2}+1}) \setminus \{V(C_{1,2^r}) \cup V(C_{2^{r-1},2^{r-1}+1})\}$. Also, $ \chi_2 \cap \chi_1 = \chi_2 \cap \chi_0 = \phi$ and $|\chi_2| \geq 2(4i - 12)$. Now, we consider the four cycles 
$C_{t2^{r-3},t2^{r-3}+1}$, for $t=1,3,5,7$, and proceed as above to get $\chi_3$ of cardinality at least $4(4i - 16)$ which do not intersect pairwise with any of $\chi_0,\chi_1$ and $\chi_2$. Continue like this to get a collection of distinct colors $\chi_1, \chi_2,\ldots,\chi_{i-1}$, where each $\chi_j$ is of cardinality at least $2^{i-1}(4i - 4j - 4)$, $j \in [i-1]$ and $\chi_0$ of cardinality at least $4i-2$. Note that the disjoint sets $\chi_j$, $j \in [i-1]$ have cardinality at least $2^{i-1}(4i - 4j - 4)$ which is an arithmetic-geometric progression with common difference -4 and common ratio 2. Therefore, $|\cup_{j=1}^{i-1}\chi_j |\geq 2^{i+1}-4i$ and together with the $4i-2$ more distinct colors in $\chi_0$, at least $2^{i+1}-2$ colors are used three times to color $GT(r)$. But, we have only $2(r-i)+1$ colors and $r\leq 2^i+i-2$ so that, $2(r-i)+1 \leq 2^{i+1}-3$, a contradiction. 


Now, if we do the same procedure with $2(r-i)+2$ colors, then $|\chi_0| \geq 4i - 4$ and $|\chi_j| \geq 2^{i-1}(4i - 4j - 6)$ so that $|\cup_{j=0}^{i-2}\chi_j| \geq 2^{i}+2^{i-1}-2$. When $r \leq 2^{i-1}+2^{i-2} + i - 3$, the number of colors $2(r-i)+2 \leq 2^{i}+2^{i-1}-4$, a contradiction. \par

Hence, the theorem.
\end{proof}

The general position variation of mutual-visibility chromatic number was introduced by \cite{MVC3} in 2024. The minimum number of colors required to color the vertices of $G$ such that each color class is in general position is called \textit{general position chromatic number}, denoted by $\chi_{\text{gp}}(G)$. The color classes obtained in the proof of Theorem \ref{binary} are also in general position except for $r=2^{i-1} + 2^{i-2} + i - 2$. Hence, we have the following corollary.

\begin{corollary}
    If $r \geq 2$ and $r \neq 2^{i-1} + 2^{i-2} + i - 2$ then $\chi_{gp}(GT(r))=\chi_{\mu}(GT(r))$.
\end{corollary}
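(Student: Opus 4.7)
The plan hinges on the universal inequality $\chi_\mu(G) \le \chi_{gp}(G)$, which holds for every graph $G$ because any general position set is automatically a mutual-visibility set: no three of its members lying on a common geodesic forces \emph{every} geodesic between two of them to be free of internal set-members, so in particular one such geodesic exists. Specialised to $GT(r)$ this already gives $\chi_\mu(GT(r)) \le \chi_{gp}(GT(r))$, and the right-hand side is pinned down by Theorem~\ref{binary} as soon as the matching upper bound $\chi_{gp}(GT(r)) \le \chi_\mu(GT(r))$ is established.

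To obtain that upper bound I would re-examine the explicit coloring produced in the proof of Theorem~\ref{binary} and argue that each of its color classes is in general position, not merely mutually visible. The color-$1$ class consists of the quasi-leaves; any geodesic between two quasi-leaves lies inside some cycle $C_{i,j}$ with all its internal vertices in $V_r^{(1)}\cup V_r^{(2)}$, so no third quasi-leaf can be internal to it. Each ``level'' class of color $2k$ or $2k+1$ (for $1\le k\le r-i$) combines a full depth-$(r-k)$ level in one copy with a single extra vertex taken from the opposite copy along the prescribed root-down sequence; the geodesic between two same-level vertices in a copy passes through their least common ancestor at strictly smaller depth, so no third level-mate is internal, and because the $k$-th vertex of the sequence sits at depth $\lfloor\log_2 k\rfloor<2k$ in the opposite tree a direct distance count shows that any detour through it costs strictly more than the direct tree geodesic. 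Hence the extra vertex is neither internal to a geodesic between level-mates nor has a level-mate internal to its own geodesics. The final color (together with the additional color appearing in Case~2) placed on the uncolored vertices of depth $i-1$ is handled by an analogous least-common-ancestor argument, which succeeds precisely in the ranges of $r$ covered by the corollary.

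The main obstacle, and the precise reason for excluding $r = 2^{i-1}+2^{i-2}+i-2$, appears exactly at the boundary case in the proof of Theorem~\ref{binary}: there the construction assigns color~$1$ to the extra internal vertex $v_{i-1,j+1}$, but $v_{i-1,j+1}$ lies on the tree-$1$ geodesic between any two of its descendant quasi-leaves, so color~$1$ is no longer in general position even though it remains a mutual-visibility class (an alternative geodesic through the other copy of the tree preserves visibility). For every other $r\ge 2$ no such collision arises, so the color classes of the construction are simultaneously mutual-visibility and general position sets. Combining this with the universal inequality $\chi_\mu\le\chi_{gp}$ yields the desired equality and hence the corollary.
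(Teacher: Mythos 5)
Your proposal is correct and follows essentially the same route as the paper: the universal inequality $\chi_\mu(G)\le\chi_{gp}(G)$ (every general position set is a mutual-visibility set) combined with the observation that the color classes of the explicit optimal coloring from Theorem~\ref{binary} are themselves general position sets, with the excluded value of $r$ being exactly the case where the extra vertex assigned color $1$ becomes the least common ancestor of two quasi-leaves and so destroys the general position property of that class. In fact the paper justifies the corollary with a single sentence, so your write-up supplies strictly more detail (the level/LCA analysis and the distance count for the extra vertex from the opposite copy) than the original.
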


The mutual-visibility coloring of glued binary trees can be naturally extended to glued $t$-ary trees. We use the notations $P_{i,j}$,$P_{i,j}'$, $Q_{i,j}$ and $Q_{i,j}'$ similar to the notation used for $GT(r)$. Now, Lemma \ref{lem} can be generalized to glued $t$-ary trees as follows.
\begin{lemma} \label{lemma}
	Let $S$ be a mutual-visibility set of $GT(r,t)$. Let $C_{n,k}$ denote the circle passing through the quasi-leaves $v_n$ and $v_k$. Then $|{S \cap V(C_{i,j}) }| \leq 3$.
\end{lemma}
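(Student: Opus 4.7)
The plan is to prove Lemma~\ref{lemma} by adapting the argument of Lemma~\ref{lem} essentially verbatim to the $t$-ary setting. The only prerequisite is analogues of the four observations that precede Lemma~\ref{lem}. Those observations rely only on the fact that each of the two copies of $T_{r,t}$ is a tree (so internal paths are unique) and on the gluing along the quasi-leaves, not on the branching factor. First I would state the $t$-ary versions of these observations: inside a single copy every geodesic is the unique tree path; and every geodesic joining a vertex of one copy to a vertex of the other is forced through a quasi-leaf descendant of both of them, and additionally through the corresponding mirrored vertex whenever the two have different depths.

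With these tools in place, let $S$ be a mutual-visibility set of $GT(r,t)$ and let $v_n, v_k$ be the two quasi-leaves lying on $C_{n,k}$. I would split into three cases according to $|S\cap\{v_n,v_k\}|$. If $|S\cap\{v_n,v_k\}|\ge 1$, then for any two distinct vertices of $S$ inside $Q_{n,k}$ the unique geodesic between them inside the copy containing $Q_{n,k}$ is a subpath of $Q_{n,k}$; a detour through the other copy is strictly longer than a geodesic of $GT(r,t)$, and so a third $S$-vertex of $Q_{n,k}$ between them would destroy mutual visibility. Hence $|S\cap V(Q_{n,k})|\le 1$, and by symmetry $|S\cap V(Q'_{n,k})|\le 1$. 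In this situation the cross-copy observations also prevent $S$ from meeting both $Q_{n,k}$ and $Q'_{n,k}$, because the unique geodesic between such a pair (through the descendant quasi-leaves of the deeper vertex) is forced through $v_n$ or $v_k$, which already lies in $S$. Combining these bounds yields $|S\cap V(C_{n,k})|\le 3$ in both the ``two quasi-leaves'' and ``one quasi-leaf'' subcases.

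The genuinely delicate case is $v_n,v_k\notin S$. Here one only has the loose bounds $|S\cap V(Q_{n,k})|\le 2$ and $|S\cap V(Q'_{n,k})|\le 2$, so one must rule out two vertices on each side. I would assume for contradiction that vertices $u_1,u_2\in Q_{n,k}$ and $u_1',u_2'\in Q'_{n,k}$ all lie in $S$ and perform the same case analysis as in Lemma~\ref{lem}: depending on the depths and positions of $u_1,u_2$ relative to $u_1',u_2'$, the observations force the unique geodesic between some pair $(u_a,u_b')$ to traverse one of the two remaining vertices of $S$, contradicting mutual visibility. This final bookkeeping is the main obstacle, but since the observations do not reference $t$ and $Q_{n,k}, Q'_{n,k}$ are themselves tree paths, the check reduces to exactly the same argument as in the binary case, giving $|S\cap V(C_{n,k})|\le 3$.
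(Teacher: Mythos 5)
Your overall strategy coincides with the paper's: the paper states Lemma~\ref{lemma} with no separate proof, relying on the argument for Lemma~\ref{lem} carrying over verbatim, and you correctly observe that nothing in that argument---uniqueness of geodesics inside one copy, the structure of cross-copy geodesics, and the final $2{+}2$ case analysis---depends on the branching factor $t$. However, two local steps in your write-up are misstated, and the first one matters for the count. In the case $|S\cap\{v_n,v_k\}|\ge 1$, your stated reason (``a third $S$-vertex of $Q_{n,k}$ between them would destroy mutual visibility'') only yields $|S\cap V(Q_{n,k})|\le 2$, which is not enough: one quasi-leaf plus two vertices on each of $Q_{n,k}$ and $Q'_{n,k}$ would still allow five vertices of $S$ on the cycle. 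The bound $\le 1$ comes from the quasi-leaf itself: if $v_n\in S$ and $u_1,u_2\in Q_{n,k}\cap S$, the unique geodesic from $v_n$ to whichever of $u_1,u_2$ lies farther along $P_{n,k}$ is the corresponding initial segment of $P_{n,k}$ (any detour through the other copy is strictly longer), and it passes through the nearer one. Second, your claim that every geodesic between a vertex of $Q_{n,k}$ and a vertex of $Q'_{n,k}$ is ``forced through $v_n$ or $v_k$'' is false in general; for instance, between mirror vertices $v_{i,j}$ and $v'_{i,j}$ the geodesics run through every quasi-leaf of $T_{v_{i,j}}$, most of which are not on $C_{n,k}$. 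What is actually needed in the two-quasi-leaf subcase is only that mutual visibility of $v_n$ and $v_k$ forces one of $Q_{n,k}$, $Q'_{n,k}$ to be disjoint from $S$; in the one-quasi-leaf subcase the bound $1+1+1$ already suffices with no cross-copy claim at all. With these two repairs your proof is exactly the intended one, and the $t$-independence point you make is the essential observation.
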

        
	\begin{theorem}
		For $r, t \geq 2$,
		
		$$\chi_\mu(GT(r,t))=
		\begin{cases}
			2(r-i)+3,~ ~ \frac{t^{i-1}-1}{t-1}+i-1 \leq r \leq \frac{t^{i-1}-1}{t-1}+\frac{t^{i-1}}{2}+i-2\\
            2(r-i)+2,~ ~ \frac{t^{i-1}-1}{t-1} +\frac{t^{i-1}}{2}+i-1 \leq r \leq \frac{t^i-1}{t-1}+i-1
			
		\end{cases}$$
		\end{theorem}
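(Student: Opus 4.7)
The plan is to mirror the proof of Theorem~\ref{binary}, invoking Lemma~\ref{lemma} in place of Lemma~\ref{lem}. The proof again splits into an upper bound obtained from an explicit coloring, and a lower bound obtained from a nested-cycle counting argument.

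For the upper bound, I would construct a coloring directly. Give color $1$ to every quasi-leaf. For $k = 1,2,\ldots,r-i$, assign color $2k$ to every vertex of depth $r-k$ in $V_r^{(1)}$ and to the $k$-th vertex of $V_r^{(2)}$ listed in the breadth-first order $v'_{1,1}, v'_{2,1},\ldots,v'_{2,t}, v'_{3,1},\ldots$; symmetrically assign color $2k+1$ to every vertex of depth $r-k$ in $V_r^{(2)}$ and to the $k$-th vertex of $V_r^{(1)}$ in the mirror breadth-first order. After this pass every vertex of depth at least $i$ in both copies is colored, and a prefix of the $t^{i-1}$ vertices at depth $i-1$ is covered on each side; the precise count is governed by the identity $\sum_{d=0}^{i-2} t^d = \frac{t^{i-1}-1}{t-1}$ together with the range condition on $r$. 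In the Case 2 range the remaining depth-$(i-1)$ vertices form at most half of that level on each side, and one new color covers them all (total $2(r-i)+2$). At the lower boundary $r = \frac{t^{i-1}-1}{t-1}+\frac{t^{i-1}}{2}+i-1$ of Case 2 an extra depth-$(i-1)$ vertex is reassigned to color $1$, matching the binary proof's boundary trick. In Case 1 more vertices of depth $i-1$ remain, necessitating two new colors split between $V_r^{(1)}$ and $V_r^{(2)}$ (total $2(r-i)+3$). That each color class is a mutual-visibility set then follows from Lemma~\ref{lemma} together with the geodesic observations recorded for $GT(r,t)$.

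For the lower bound, I would show that $2(r-i)+1$ colors are insufficient in either range, and that $2(r-i)+2$ colors are insufficient in the Case 1 range. The outermost cycle $C_{1,t^r}$ has $4r$ vertices, so by Lemma~\ref{lemma} at least $4r - 2m$ colors (call this set $\chi_0$) must be used exactly three times in it, where $m$ denotes the supposed number of colors. At level $j \in [i-1]$, one selects a family of $(t-1)t^{j-1}$ ``fresh'' cycles whose apex sits at depth $j-1$, each contributing $4(r-j+1)$ vertices of which a fixed number are inherited from previously considered cycles. The leftover vertices must again be covered by colors used three times, producing a set $\chi_j$ whose cardinality grows as $(t-1)t^{j-1}$ multiplied by a linear term in $i-j$. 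Summing $|\chi_j|$ for $j=0,1,\ldots,i-1$ yields an arithmetic-geometric progression with common ratio $t$, whose value ultimately exceeds $2(r-i)+1$ (respectively $2(r-i)+2$) in the relevant range, producing the desired contradiction.

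The main obstacle, as in the binary case, is establishing pairwise disjointness of the sets $\chi_j$. A color simultaneously in $\chi_j$ and $\chi_{j'}$ with $j<j'$ must be shown to force four occurrences inside some smaller cycle $C_{p,q}$, contradicting Lemma~\ref{lemma}. In the $t$-ary setting each apex has $t-1$ sibling subtrees rather than one, so the shared vertices between consecutive levels form larger configurations and the pigeonhole step of the binary argument must be refined into a case analysis over the $t$ children of the apex, showing that a double use of a color still propagates into a single cycle with four occurrences. A subsidiary bookkeeping issue is the half-integer $t^{i-1}/2$ appearing in the case boundaries when $t$ is odd; this is handled by splitting depth $i-1$ into halves of sizes $\lceil t^{i-1}/2\rceil$ and $\lfloor t^{i-1}/2\rfloor$ and verifying the inequalities at each endpoint of the two ranges separately.
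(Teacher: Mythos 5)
Your outline follows the paper's own route: the same explicit level-by-level coloring for the upper bound, and the same nested-cycle counting for the lower bound with Lemma~\ref{lemma} in place of Lemma~\ref{lem}. The upper-bound half and the proof that $2(r-i)+1$ colors are insufficient are essentially the paper's argument; you also correctly identify pairwise disjointness of the $\chi_j$ as the crux, and your proposed resolution (a doubly-counted color forces four occurrences inside a single smaller cycle) is exactly what the paper's Claims~1 and~2 establish.

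The genuine gap is in the second lower bound, where you assert that summing $|\chi_j|$ over $j=0,\ldots,i-1$ ``ultimately exceeds $2(r-i)+2$ in the relevant range.'' For the family of cycles you describe this is false for most $t\geq 3$. With $m=2(r-i)+2$ colors each per-cycle surplus drops by $2$, the level-$(i-1)$ cycles contribute nothing, and the total over $j\leq i-2$ is only $\frac{2(t^{i-1}-1)}{t-1}+2t^{i-2}$, whereas the hypothesis on $r$ only gives $2(r-i)+2\leq \frac{2(t^{i-1}-1)}{t-1}+t^{i-1}-2$. The required inequality $2t^{i-2}>t^{i-1}-2$ is equivalent to $t^{i-2}(t-2)<2$, which fails already for $t=4$, $i=2$ and for every $t=3$, $i\geq 3$; so no contradiction is reached. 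The structural reason is that your nested cycles pass through only $2t^{i-2}$ of the $t^{i-1}$ apices at depth $i-1$ --- in the binary case that is the whole level, but for $t\geq 3$ there remain $t^{i-1}-2t^{i-2}>0$ untouched apices. The paper closes the argument by selecting one quasi-leaf beneath each untouched depth-$(i-1)$ vertex and running additional cycles through pairs of these quasi-leaves; each such cycle carries $2(r-i+3)$ vertices lying outside all previously used cycles and therefore forces at least two further colors to be repeated three times, lifting the count to $\frac{2(t^{i-1}-1)}{t-1}+t^{i-1}$ and restoring the contradiction. Without this extra family of cycles your lower bound for the $2(r-i)+3$ range does not close. (Your remark about the half-integer $t^{i-1}/2$ for odd $t$ is a fair bookkeeping point, which the paper itself does not address.)
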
 
        
	\begin{proof}
	We begin the proof by establishing a coloring of $GT(r,t)$ using the number of colors specified in the theorem.\par
	
	\ni{\bf Case 1: $\frac{t^{i-1}-1}{t-1} +\frac{t^{i-1}}{2}+i-1 \leq r \leq \frac{t^i-1}{t-1}+i-1$}.\\
    We proceed with the same coloring pattern used in Theorem 4.6 so that we reach a stage where the vertices of depth $i-1$ in $V_r^{(2)}$, $v'_{i,1},v'_{i,2},\ldots,v'_{i,j}$ will be already colored, where $j=r-i-(\frac{t^{i-1}-1}{t-1}) \geq \frac{t^{i-1}}{2} - 1$, (since, $r \geq \frac{t^{i-1}-1}{t-1} +\frac{t^{i-1}}{2}+i-1 $). Similarly, in $V_r^{(2)}$, $v'_{i,a},v'_{i,a-1},\ldots,v'_{i,a-j-1}$ will be already colored, where $a=t^{i-1}$ and $j \geq \frac{t^{i-1}}{2}-1$. When $r \geq \frac{t^{i-1}-1}{t-1} +\frac{t^{i-1}}{2}+i-1$, we have $j \geq \frac{t^{i-1}}{2}$ so that there are fewer than $\frac{t^{i-1}}{2}$ uncolored vertices of depth $i-1$. In this case, color all the remaining uncolored vertices with color $2(r-i)+2$. When $r = \frac{t^{i-1}-1}{t-1} +\frac{t^{i-1}}{2}+i-1$, color $v_{i-1,j+1}$ with color 1 and all the remaining vertices with color $2(r-i)+2$ to obtain a mutual-visibility coloring of $GT(r,t)$ with $2(r-i)+2$ colors.
	
	\ni{\bf Case 2: $ \frac{t^{i-1}-1}{t-1}+i-1 \leq r \leq \frac{t^{i-1}-1}{t-1}+\frac{t^{i-1}}{2}+i-2$}.\\
		\ni{\bf Sub case 1: $r= \frac{t^{i-1}-1}{t-1}+i-1 $}.\\
	In this case, we proceed with the same coloring pattern given in Case 1 up to $k=r-i+1$ so that the entire vertices of $GT(r,t)$ are colored using  $2(r-i)+3$ colors.

\ni{\bf Sub case 2: $\frac{t^{i-1}-1}{t-1}+i \leq r \leq \frac{t^{i-1}-1}{t-1}+\frac{t^{i-1}}{2}+i-2$}.\\
	The coloring procedure is the same as Case 1, where the only difference is that more than half of the vertices of depth $i-1$( all vertices when $r=\frac{t^{i-1}-1}{t-1}+i$) will be uncolored. We can use one color each for these uncolored vertices in $V_r^{(1)}$ and $V_r^{(2)}$ so as to obtain a mutual-visibility coloring of $GT(r,t)$. 
	
	To prove the converse, first we prove that $2(r-i)+1$ colors are not enough for a mutual-visibility coloring of $GT(r,t)$, where $i$ is the unique integer such that $\frac{t^{i-1}-1}{t-1}+i-1 \leq r \leq \frac{t^i-1}{t-1}+i-1$. Assume the contrary that there exists a mutual-visibility coloring with $2(r-i)+1$ colors.\par
	
	Consider the cycle $C_{1,t^r}$. By Lemma \ref{lemma}, a color can be used at most 3 times to color the vertices of this cycle. Since we have only $2(r-i)+1$ colors, and there are $4r$ vertices in $C_{1,t^r}$, at least $4r - 2[2(r-i)+1] = 4i-2$ colors must be used at least three times. Let $\chi_0$ be the collection of colors which are used three times to color the vertices of $C_{1,t^r}$ so that $|\chi_0|\geq4i-2$. Now, consider $(t-1)$ cycles $C_{mt^{r-1},mt^{r-1}+1}$ where $m \in [t-1]$, note that all the cycles have length $4r$. Let $S_m=V(C_{mt^{r-1},mt^{r-1}+1})\setminus \{V(C_{1,t^r}) \cup V(C_{(m-1)t^{r-1},(m-1)t^{r-1}+1} )\}$, $m \in [t-1]$ . Note that there are $4r-4$ vertices in $S_m$, if $m \in [t-2]$ and $4r-6$, if $m=t-1$. Similar to the discussion above, at least $4i - 6$ colors must be used three times to color those vertices in $S_m$, $m\in [t-2]$ and at least $4i-8$ colors must be used three times to color vertices in $S_{t-1}$. Let $\chi_m$ denotes the collection of colors which are used three times to color the vertices in $S_m$ , $ m \in [t-1]$ so that $|\chi_m|\geq4i - 6$, where $m \in [t-2]$ and $|\chi_m|\geq4i - 8$, when $m =t-1$. \\
\ni{\textbf{Claim 1:}} $\chi_n \cap \chi_m = \phi$, for every $n \neq m$.\\
On the contrary, if $c \in \chi_m \cap \chi_n$ then $c$ must be used at least two times in the geodesic path $v_{1,1} - v_{mt^{r-1}}-v'_{1,1}$ or in the geodesic path $v_{1,1}-v_{mt^{r-1}+1}-v'_{1,1}$. Similarly, $c$ must be used at least two times in the geodesic $v_{3,tn}-v_{nt^{r-1}}-v'_{3,tn}$ or $v_{3,tn+1}-v_{nt^{r-1}+1}-v'_{3,tn+1}$. But then, in one of the cycles $C_{mt^{r-1},nt^{r-1}}, C_{(m+1)t^{r-1},nt^{r-1}}, C_{mt^{r-1},(n+1)t^{r-1}}, C_{(m+1)t^{r-1},(n+1)t^{r-1}} $ , $c$ will be used four times which contradicts Lemma \ref{lem}. Therefore, $\chi_n \cap \chi_m= \phi$.\\
Generally in $n^ {th} $ step we consider $(t-1)t^{n-1}$ cycles $C_{mt^{r-n},mt^{r-n}+1}$ where $m \in [t^n] - t\mathbb{N}$. Note that in the $n^{th}$ step all the cycles have length $4(r-n+1)$. Set $S^0_m=V(C_{1,2^r})$ and define  $S_m^{n}=V(C_{mt^{r-n},mt^{r-n}+1}) \setminus  \{V( C_{1,t^r}) \cup V( C_{(m-1)t^{r-n},(m-1)t^{r-n}+1})  \cup S_m ^{n-1} \}$, for every $n \in [i-1]$ . Note that there are $4r-4n$ vertices in $S^n_m$,  if $ m \not \equiv  1\pmod t$ and $4r-4n-2$, if $ m \equiv 1 \pmod t$. Similar to the discussion above, since we have only $2(r-i)+1$ colors, at least $4r-4n - 2[2(r-i)+1] = 4i-4n-2$ colors must be used at least three times to color those vertices in  $S_m^n$, where $ m \not \equiv 1 \pmod t$, and at least $4i-4n-4$ colors must be used three times to color those vertices in $S_m^n$, where $ m \equiv 1 \pmod t$. Let $\chi^n_m$ denotes the collection of colors which are used three times to color the vertices in $S_m^n$, $ m \in [t-1] - t\mathbb{N}$ so that $|\chi^n_m|\geq 4i - 4n-2$, if $ m \not \equiv 1 \pmod t$ and $|\chi^n_m|\geq 4i - 4n-4$, if $ m \equiv 1 \pmod t$. \par
Now, we proceed to prove a general case of Claim 1.\par
\ni{\textbf{Claim 2:}}  $\chi^n_m$ are disjoint for every $m \in [t-1]-t\mathbb{N}$ and $n \in [i-1]$.\\
	 On the contrary, if $c \in \chi^n_m \cap \chi^q_p$, then $c$ must be used at least two times in the geodesic path $v_{(n-2),m}-v_{mt^{r-n}}-v'_{(n-2),m}$ or in the geodesic path $v_{(n-2),(m+1)}-v_{mt^{r-n}+1}-v'_{(n-2),(m+1)}$. Similarly, $c$ must be used at least two times in the geodesic path $v_{(q-2),p}-v_{pt^{r-q}}-v'_{(q-2),p}$ or in the geodesic path $v_{(q-2),(p+1)}-v_{pt^{r-q}+1}-v'_{(q-2),(p+1)}$. But then $c$ will be used at least four times in one of the cycles $C_{mt^{r-n},pt^{r-q}}$, $C_{(m+1)t^{r-n},pt^{r-q}}$, $C_{mt^{r-n},(p+1)t^{r-q}}$ and $C_{(m+1)t^{r-n},(p+1)t^{r-q}}$, which contradicts Lemma \ref{lemma}. Therefore, $\chi^n_m \cap \chi^q_p = \phi$.\par
Note that we have considered $1+(t-1)+t(t-1)+t^2(t-1)+\ldots+t^{i-2}(t-1) = 1+ t^{i-1} - 1 = t^{i-1}$ cycles. Since all the $t^n-1$ cycles $\chi^n_m$ are disjoint and since cardinality of  elements in $\chi^n_m$ is at least $4i-4n-2$, if  $ m \not \equiv 1 \pmod t$ and $4i-4n-4$, if  $ m \equiv 1 \pmod t$ implies that $|\bigcup_{m,n} \chi^n_m| \geq \sum_{n=1}^{i-1}\Bigl ( (t-2) t^{n-1}(4i-4n-2) + t^{n-1}(4i-4n-4) \Bigr)= \frac{2(t^{i}-1)}{t-1}$. Therefore, at least $\frac{2(t^{i}-1)}{t-1}$ colors are used three times to color $GT(r,t)$. But, we have only $2(r-i)+1 $ colors and $r \leq  \frac{t^i-1}{t-1}+i-1$ so that $2(r-i)+1 \leq \frac{2(t^{i}-1)}{t-1}-1$, a contradiction. \\
Therefore, at least $2(r-i)+2$ colors are required for a mutual-visibility coloring of $GT(r,t)$. Now, let us investigate whether $2(r-i)+2$ colors are sufficient.		
Suppose we have $2(r-i)+2$ colors. Consider the cycles $C_m^n$, $n \in [i-2]$. Then by similar arguments, all these cycles  are disjoint and  cardinality of  elements in $\chi^n_m$ is at least $4i-4n-4$, if  $ m \not \equiv 1 \pmod t$ and $4i-4n-6$, if  $ m \equiv 1 \pmod t$, implies that $|\bigcup_{n,m} \chi^m_n| \geq  \frac{2(t^{i-1}-1)}{t-1} + 2t^{i-2}$, $n\in [i-2]$. Let $S$ be the collection of all vertices of depth $(i-1)$ of  $T_r^{(1)}$ that are not part of any $C_m^n$ for $n\in [i-2]$. Note that  $|S|=t^{i-1}-2t^{i-2}$ and $|S| >0$ for $t\geq 3$. Let $T$ be the collection of quasi-leaves obtained by taking one quasi-leaf of the binary tree rooted at $k$, for each $k \in S$. Now consider the cycles $C_{k,l}$ where $k,l \in T$. In each of these cycles there are exactly $2(r-i+3)$ vertices that are not part of any $C_m^n$, $n \in [i-2]$. Since we have only $2(r - i) + 2$ colors, at least two colors must be repeated at least three times in every cycle, and these colors are distinct. Using similar arguments, we can prove that these colors are also distinct from $\chi_m^n$, $n \in [i-2]$. Therefore, at least $\frac{2(t^{i-1}-1)}{t-1} -2t^{i-2} + t^{i-1}-2t^{i-2}=\frac{2(t^{i-1}-1)}{t-1} +t^{i-1}$ distinct colors must be used to color $GT(r,t)$.  But in this case, if  $r \leq \frac{t^{i-1}-1}{t-1}+\frac{t^{i-1}}{2}+i-2$, so that $2(r-i)+2 \leq \frac{2(t^{i-1}-1)}{t-1}+t^{i-1}-2$.
Hence, the theorem.
\end{proof}
    
\section{Concluding Remarks}	    
In this paper we prove the NP-completeness of the mutual-visibility coloring problem, which was left as an open question in \cite{MVC1}, reinforcing the theoretical challenge in determining the mutual-visibility chromatic number of arbitrary graphs and providing strong motivation to find the exact value and tight bounds for specific graph classes. We also determine the exact value of glued binary trees and glued $t$-ary trees. Several natural directions remains open for future investigation. In particular finding the mutual-visibility chromatic number of  highly symmetric or self similar structures such as the hypercubes, hypercube-like graphs, the Hamming graphs and the generalized  Sierpi\'{n}ski graphs, including Sierpi\'{n}ski triangle graphs and Sierpi\'{n}ski gasket graphs are interesting.\par
 
\section{Acknowledgments}
Saneesh Babu acknowledges Kerala State Council for Science,Technology and Environment (KSCSTE) for the financial support. For this work, Gabriele Di Stefano has been partially supported by the Academic Research Project ``MoNet'' at the University of L'Aquila.


\begin{thebibliography}{99}

\bibitem{Araujo-2025}
J.~Araujo, M.C.~Dourado, F.~Protti, R.~Sampaio, 
The iteration time and the general position number in graph convexities, 
Appl.\ Math.\ Comput.\ 487 (2025) Paper 129084. 

\bibitem{Babu-2026} S. Babu, B. Brešar, B. Samandi, A. S. Lakshmanan, Distance mutual-visibility coloring: relations with (total) domination, exact distance graphs and graph products, \url{arXiv:2510.10284} (2025).

\bibitem{MVC2} B. Brešar, I. Peterin, B. Samadi, I. G. Yero, Independent mutual-visibility coloring and related concepts, \url{arXiv:2505.04144} (2025).
\bibitem{MVC3} S. V. U. Chandran, G. Di Stefano, S. Haritha, E. J. Thomas, J. Tuite, Colouring a graph with position sets, \url{arXiv:2408.13494} (2024).
 
\bibitem{survey} S. V. U. Chandran S.V., S.~Klav\v{z}ar, J.~Tuite, 
The general position problem: a survey, \url{arXiv:2501.19385} (2025).

\bibitem{GPS1} S. V. U. Chandran, G. J. Parthasarathy, The Geodesic Irredundant Sets in Graphs, International Journal of Mathematical Combinatorics 4 (2016).

\bibitem{MVN2} S. Cicerone, G. Di Stefano, S. Klavžar, On the mutual-visibility in Cartesian products and triangle-free graphs, Applied Mathematics and Computation 438 (2023) 127619.

\bibitem{MVN4} S. Cicerone, G. Di Stefano, S. Klavžar, I. G. Yero, Mutual-visibility problems on graphs of diameter two, European journal of combinatorics, 120 (2024) 103995.

\bibitem{MVN1} G. Di Stefano, Mutual-visibility in graphs, Applied Mathematics and Computation, 419 (2022) 126850.
 
\bibitem{DDC} H. E. Dudeney, Amusements in Mathematics Nelson, (1917).

\bibitem{MVN3} G. B. Ekinci, C. Bujtás, Mutual-visibility problems in Kneser and Johnson graphs, \url{arXiv:2403.15645} (2024).

\bibitem{ENT3} V. Froese, I. Kanj, A. Nichterlein, R. Niedermeier, Finding points in general position, International journal of computational geometry \& applications 27 (2017) 277-296.

\bibitem{ANC2} R. Gasser, Solving nine men's morris, Computational Intelligence 12 (1996): 24-41.

\bibitem{MVC1} S. Klavžar, D. Kuziak, J. C. V. Tripodoro, I. G. Yero, Coloring the vertices of a graph with mutual-visibility property, Open Math. 23 (2025) 20250193.

\bibitem{Klavzar-2025b}
S.~Klav\v{z}ar, A.S.~Lakshmanan, D.~Roy,
Counting largest mutual-visibility and general position sets of glued $ t $-ary trees, Results Math.\ 80 (2025) Paper 207.

\bibitem{KorzeVesel-2025}
D.~Kor\v{z}e, A.~Vesel, 
Mutual-visibility and general position sets in {S}ierpi\'nski triangle graphs, Bull.\ Malays.\ Math.\ Sci.\ Soc.\ 48 (2025) Paper 106.

\bibitem{GPS2} P. Manuel, S. Klavžar, A general position problem in graph theory, Bulletin of the Australian Mathematical Society, 98  (2018) 177-187.

\bibitem{ENT} D. T. Nagy, Z. L. Nagy, R. Woodroofe, The extensible no-three-in-line problem, European Journal of Combinatorics 114 (2023) 103796.

\bibitem{ENT4} M. S. Payne, D. R. Wood, On the general position subset selection problem, SIAM journal on discrete mathematics 27.4 (2013) 1727-1733.

\bibitem{ENT2} A. Pór, D. R. Wood, No-three-in-line-in-3D, Algorithmica 47 (2007) 481-488.

\bibitem{Roy-2025}
D.~Roy, S.~Klav\v{z}ar, A.S.~Lakshmanan,
Mutual-visibility and general position in double graphs and in Mycielskians, Appl.\ Math.\ Comput.\ 488 (2025) Paper 129131.

\bibitem{Roy-2026} 
D. Roy, S. Klavžar, A.S. Lakshmanan, J. Tian, 
Varieties of mutual-visibility and general position on Sierpi\'nski graphs, \url{arXiv:2504.19671} (2025).
	    
\bibitem{schafer-1978} 
T.~J.~Schaefer, The complexity of satisfiability problems, Proceedings of the 10th Annual ACM Symposium on Theory of Computing. San Diego, California, (1978) 216--226.

\bibitem{ANC1} C. Zaslavsky, Tic tac toe: and other three-in-a row games from ancient Egypt to the modern computer. Crowell; 1982.

\end{thebibliography}
\end{document}